\pgfplotsset{compat = 1.9}
\algrenewcommand\algorithmicrequire{\textbf{Input:}}
\algrenewcommand\algorithmicensure{\textbf{Output:}}
\newcommand{\R}{\mathbb{R}}
\newcommand{\C}{\mathbb{C}}
\renewcommand{\H}{\mathbb{H}}
\renewcommand{\L}{\mathbb{L}}
\newcommand{\D}{\mathbb{D}}
\newcommand{\N}{\mathbb{N}}
\renewcommand{\Im}{\operatorname{Im}}
\newcommand{\rk}{\operatorname{rank}}
\DeclareMathOperator*{\argmax}{arg\,max}
\DeclareMathOperator*{\argmin}{arg\,min}
\DeclareMathOperator*{\diag}{diag}
\DeclareMathOperator*{\vectorize}{vec}
\newcommand{\bb}{\textbf{b}}
\newcommand{\bc}{\textbf{c}}
\newcommand{\bd}{\textbf{d}}
\newcommand{\be}{\textbf{e}}
\newcommand{\bff}{\textbf{f}}
\newcommand{\bk}{\textbf{k}}
\newcommand{\bp}{\textbf{p}}
\newcommand{\br}{\textbf{r}}
\newcommand{\bn}{\textbf{n}}
\newcommand{\bu}{\textbf{u}}
\newcommand{\bv}{\textbf{v}}
\newcommand{\bw}{\textbf{w}}
\newcommand{\bx}{\textbf{x}}
\newcommand{\by}{\textbf{y}}
\newcommand{\bz}{\textbf{z}}
\newcommand{\bA}{\textbf{A}}
\newcommand{\bC}{\textbf{C}}
\newcommand{\bD}{\textbf{D}}
\newcommand{\bE}{\textbf{E}}
\newcommand{\bF}{\textbf{F}}
\newcommand{\bH}{\textbf{H}}
\newcommand{\bI}{\textbf{I}}
\newcommand{\bJ}{\textbf{J}}
\newcommand{\bK}{\textbf{K}}
\newcommand{\bL}{\textbf{L}}
\newcommand{\bP}{\textbf{P}}
\newcommand{\bQ}{\textbf{Q}}
\newcommand{\bR}{\textbf{R}}
\newcommand{\bU}{\textbf{U}}
\newcommand{\bV}{\textbf{V}}
\newcommand{\bW}{\textbf{W}}
\newcommand{\bX}{\textbf{X}}
\newcommand{\bY}{\textbf{Y}}
\newcommand{\bZ}{\textbf{Z}}
\newcommand{\blambda}{\boldsymbol{\lambda}}
\newcommand{\hi}{\hat{\imath}}
\newcommand{\tn}{\tilde{n}}
\newcommand{\ti}{\tilde{\imath}}
\newcommand{\cC}{\mathcal{C}}
\newcommand{\cH}{\mathcal{H}}
\newcommand{\cO}{\mathcal{O}}
\newcommand{\cU}{\mathcal{U}}
\newcommand{\cX}{\mathcal{X}}
\newcommand{\cY}{\mathcal{Y}}
\let\originalleft\left
\let\originalright\right
\renewcommand{\left}{\mathopen{}\mathclose\bgroup\originalleft}
\renewcommand{\right}{\aftergroup\egroup\originalright}
\newcommand{\fronorm}[1]{\left\lVert #1 \right\rVert_{\operatorname{F}}}
\newcommand*\customstrut[1]{\vrule width0pt height0pt depth#1\relax}
\newtheorem{lemma}{Lemma}
\definecolor{darkgreen}{RGB}{0, 90, 10}
\definecolor{darkblue}{RGB}{53, 42, 135}
\definecolor{orangeyellow}{RGB}{225, 185, 82}
\definecolor{greenish}{RGB}{89, 189, 140}
\tikzstyle{sline} = [
\pgfplotsset{
    colormap={paaacmap}{
        rgb255=(53, 42, 135)   
        rgb255=(15, 92, 221)   
        rgb255=(18, 125, 216)  
        rgb255=(7, 156, 207)   
        rgb255=(21, 177, 180)  
        rgb255=(89, 189, 140)  
        rgb255=(165, 190, 107) 
        rgb255=(225, 185, 82)  
        rgb255=(252, 206, 46)  
        rgb255=(249, 251, 14)  
    }
}
\begin{document}
	
    \title{Multivariate Rational Approximation via Low-Rank Tensors and the p-AAA Algorithm}
    
    \author[$\ast$]{Linus Balicki}
    \author[$\ast$]{Serkan Gugercin}
    \affil[$\ast$]{Department of Mathematics, Virginia Tech, Blacksburg, VA, 24061, USA}
              
    \keywords{Rational approximation, Barycentric forms, Tensors, Reduced-order modeling}
    
    \abstract{Approximations based on rational functions are widely used  in various applications across computational science and engineering. For univariate functions, the adaptive Antoulas-Anderson algorithm (AAA), which uses the barycentric form of a rational approximant,  has established itself as a powerful tool for efficiently computing such approximations. The p-AAA algorithm, an extension of the AAA algorithm specifically designed to address multivariate approximation problems, has been recently introduced.
    A common challenge in multivariate approximation methods is that multivariate problems with a large number of variables often pose significant memory and computational demands. To tackle this hurdle in the setting of p-AAA, we first introduce  barycentric forms that are represented in the terms of separable functions. This then leads to the low-rank p-AAA algorithm which leverages low-rank tensor decompositions in the setting of barycentric rational approximations. We discuss various theoretical and practical aspects of the proposed computational framework and showcase its effectiveness on four numerical examples. We focus specifically on applications in parametric reduced-order modeling for which higher-dimensional data sets can be tackled effectively with our novel procedure. }
    
    \novelty{}
    
    \maketitle
	
	\section{Introduction}
	Approximating a function from a given set of its samples is a fundamental step in many applications across computational science and engineering. In particular, rational function-based approximation frameworks have been extensively studied and applied in both theoretical and practical contexts. The appeal of rational approximations largely arises from their effectiveness in capturing a function's behavior near singularities or on unbounded domains \cite{trefethen_approximation_2020}. This fact has made rational functions ubiquitous in various fields ranging from model reduction to system identification to PDE approximation theory to numerical linear algebra and many more. For a selected few references highlighting such diverse applications of rational functions, we refer the reader to, e.g., \cite{ho_effective_1966,silverman_realization_1971,BerG17,DramacGB2015QuadVF,baur2014model,gosea2024structured,druskin2009solution,borcea2021reduced,bertram_class_2024,benner2013numerical,simoncini2016computational,wilber2021computing,antoulas_approximation_2005,guttel_nonlinear_2017,antoulas_interpolatory_2020,gustavsen_rational_1999,karachalios_6_2021,boulle_data-driven_2022,brubeck_lightning_2022,wilber_data-driven_2022,rubin_bounding_2022,austin_computing_2015,lietaert_automatic_2022,brennan_contour_2023}
    and the references therein. 

    Rational approximants can be equivalently represented in many different forms.  A numerically robust and theoretically elegant representation is the so-called barycentric form of rational approximants~\cite{berrut_barycentric_2004}.
    For univariate functions $f:\C \rightarrow \C$ many effective rational approximation frameworks have been developed using the barycentric form; see, e.g., \cite{antoulas_scalar_1986,berrut_barycentric_2004,cirillo2019advances,nakatsukasa_aaa_2018,antoulas_chapter_2017,gustavsen_rational_1999,driscoll_aaa_2024} and the references therein. Recently, several efforts have been made to extend these barycentric form based methods to the multivariate setting where we aim to approximate a function $\bff:\C^d \rightarrow \C$ by a multivariate rational function $\br:\C^d \rightarrow \C$; see, e.g., \cite{deschrijver_stability_2008,antoulas_two-variable_2012,ionita_data-driven_2014,rodriguez_p-aaa_2023}.  In order to make our introduction more concise we will focus on the two-variable case for now and generalize the discussion in the proceeding sections.
    In the two-variable case ($d=2$) with $\bz = \left(z^{(1)},z^{(2)}\right) \in \C^2$,
    the barycentric form of the rational approximant reads
	\begin{equation}
		\label{eq:introbarycentricform}
		\br\left(\bz \right) = \sum_{i_1=1}^{n_1} \sum_{i_2=1}^{n_2} \frac{\alpha_{i_1 i_2} \bff\left(\lambda^{(1)}_{i_1},\lambda^{(2)}_{i_2}\right)}{\left(z^{(1)}-\lambda^{(1)}_{i_1}\right) \left(z^{(2)}-\lambda^{(2)}_{i_2}\right)} \Bigg/ \sum_{i_1=1}^{n_1} \sum_{i_2=1}^{n_2} \frac{\alpha_{i_1 i_2} }{\left(z^{(1)}-\lambda^{(1)}_{i_1}\right) \left(z^{(2)}-\lambda^{(2)}_{i_2}\right)}.
	\end{equation}
     We will discuss properties of the introduced barycentric form~\eqref{eq:introbarycentricform} in detail in Section~\ref{sec:multivariatebarycentric}, but we point out the key components here briefly: 
	As in~\eqref{eq:introbarycentricform} and throughout this paper we will use superscripts to indicate that a variable, matrix or set is associated with the $j$-th variable $z^{(j)}$ and subscripts to specify the entry of a matrix or index of an element belonging to a set. In~\eqref{eq:introbarycentricform}    
    $\lambda^{(1)}_{i_1}, \lambda^{(2)}_{i_2} \in \C$ with $(i_1,i_2) \in \{ 1, \ldots, n_1\} \times \{ 1, \ldots, n_2 \}$ 
    are the barycentric nodes with $\lambda^{(1)}_{i_1}$ being associated to the variable $z^{(1)}$ and $\lambda^{(2)}_{i_2}$ to $z^{(2)}$. 
    Furthermore, $\alpha \in \C^{n_1 \times n_2}$ is the matrix of barycentric coefficients $\alpha_{i_1i_2}$. As we will discuss in more detail in Section~\ref{sec:multivariatebarycentric},
    $\br\left(\bz \right)$ is a two-variable rational function of
    $z^{(1)}$ and $z^{(2)}$. Quality of the rational approximant 
    $\br\left(\bz \right)$ solely depends on good choices
    of the barycentric nodes and coefficients, and determining such good choices is at the core of many rational approximation algorithms such as the parametric Loewner framework \cite{ionita_data-driven_2014,antoulas_loewner_2024}. Another recently proposed approach which falls into this category     
    is the parametric adaptive Antoulas–Anderson (p-AAA) algorithm \cite{rodriguez_p-aaa_2023} which extends the AAA framework \cite{nakatsukasa_aaa_2018} to the multivariate setting. The p-AAA framework is our main focus in this paper. The p-AAA algorithm is a data-driven approximation method which leverages a given set of samples
	\begin{equation}
		\label{eq:samples}
		\bF = \left\{ \bff(\bz) \; \bigg| \; \bz \in \bZ^{(1)} \times \bZ^{(2)} \right\} \subset \C
	\end{equation}
	with corresponding sampling points
	\begin{equation}
		\label{eq:samplingvalues}
		\bZ^{(1)} = \left\{Z^{(1)}_1,\ldots,Z^{(1)}_{N_1}\right\} \subset \C \quad \text{and} \quad \bZ^{(2)} = \left\{Z^{(2)}_1,\ldots,Z^{(2)}_{N_2}\right\} \subset \C,
	\end{equation}
	to construct the rational approximant $\br \approx \bff$ in the barycentric form introduced in \eqref{eq:introbarycentricform}. As in the univariate case of AAA, the two central steps of p-AAA are (i) choosing barycentric nodes via a greedy selection and (ii) determining the barycentric coefficient matrix $\alpha$ by solving a linear least-squares (LS) problem of the form
	\begin{equation}
		\label{eq:introls}
		\min_{\fronorm{\alpha}=1}\lVert \L_2 \vectorize(\alpha) \rVert_2^2.
	\end{equation}
	The optimization problem~\eqref{eq:introls} has a closed-form solution in terms of the singular value decomposition (SVD) of the higher-order Loewner matrix $\L_2 \in \C^{N_1 N_2 \times n_1 n_2}$, which we will introduce later. Computing the SVD of $\L_2$ is the dominant cost of the algorithm. While solving the p-AAA LS problem efficiently is typically feasible in the two-variable setting, the computational complexity of the algorithm increases drastically when moving to many variables.  This is primarily due to the fact that the barycentric form will be based on a higher-order tensor of barycentric coefficients $\alpha \in \C^{n_1 \times \cdots \times n_d}$ when considering $d > 2$ variables. In this case, the computational complexity of the underlying LS problem grows exponentially with the number of added variables as in many multivariate approximation problems. This is typically referred to as the ``curse of dimensionality" and frequently affects algorithms for modeling and approximating multivariate functions. A common approach to overcome the computational burden associated with many variables is to consider low-rank tensor approximations which are closely connected to representations utilizing separable functions \cite{grasedyck_literature_2013}. A key aspect of this paper is to investigate how this perspective connects to multivariate barycentric forms and rational approximation with the p-AAA algorithm. In this context, we will show how to utilize low-rank tensor decompositions for the barycentric coefficients $\alpha$ to significantly alleviate the computational burden imposed by the curse of dimensionality in the p-AAA framework. This will lead to a modified formulation of the algorithm which is scalable well beyond the two variable case. All throughout, we consider applications in parametric reduced-order modeling as a motivation for our analysis and proposed algorithm. 
	
	The next sections are structured as follows: Section~\ref{sec:motivatingproblems} discusses two fundamental  problems in data-driven reduced order modeling which will serve as our primary motivation for investigating multivariate rational approximation algorithms. In Section~\ref{sec:pAAA} we revisit multivariate barycentric forms and the p-AAA algorithm. We introduce a revised version of the formulations established in previous works,  resulting in a concise description of the 
    many-variable setting. In Section~\ref{sec:lrbarycentric} we introduce barycentric forms that are represented in terms of separable functions and establish their connection to low-rank tensor decompositions. Section~\ref{sec:lrpaaa} comprises our main result and develops the low-rank p-AAA algorithm. In Section~\ref{sec:practical} we discuss important practical aspects of our proposed method. Several numerical examples are presented in Section~\ref{sec:numericalexamples}. Finally, conclusions and possible future research directions are summarized in Section~\ref{sec:conclusion}.
	
	\section{Motivating Problems}
	\label{sec:motivatingproblems}
	We begin by considering two fundamental problems arising in reduced order modeling. As we will see, both of these problems aim to approximate a function of the form
	\begin{equation}
		\label{eq:fstructure}
		\bff(\bz) = \bc(\bz)^\top \bK(\bz)^{-1} \bb(\bz) \in \C,
	\end{equation}
	where $\bb(\bz),\bc(\bz) \in \C^n$ and $\bK(\bz) \in \C^{n \times n}$. We point out that in many cases $\bff$ itself will be a rational function (e.g., if $\bb,\bc,\bK$ have a rational dependence on $\bz$), which makes rational approximation appealing. However, even in cases where this is not satisfied, rational functions are typically still good choices for approximating $\bff$ due to singularities arising from the $\bK(\bz)^{-1}$ term. \\
	
	First, we consider a linear stationary parameter-dependent partial differential equation (PDE) in the weak form
	\begin{align*}
		a(\bv(\bp),\bw;\bp) &= g(\bw;\bp) \quad \text{for all } \bw \in \cH, \\
		\by(\bp) &= l(\bv(\bp);\bp),
	\end{align*}
	where $\bp\in \C^d$ is a vector of parameters, $a(\cdot,\cdot;\bp) \in \R$ is a bilinear form, $\bv(\bp) \in \mathcal{H}$ is the solution in the real Hilbert space $\cH$, $\by(\bp) \in \R$ is the model output, and $g(\cdot;\bp),l(\cdot;\bp) \in \R$ are linear functionals. We assume that all quantities are appropriately defined such that existence and uniqueness of solutions is guaranteed. After discretizing the problem (e.g., via the finite element method) we obtain a model of the form
	\begin{equation}
		\label{eq:stationaryparametric}
		\begin{aligned}
			\bA(\bp) \bx(\bp) &= \bb(\bp), \\
			\bff(\bp) &= \bc(\bp)^\top \bx(\bp),
		\end{aligned}
	\end{equation}
	for which the output $\bff$ can be written in the form $\bff(\bp) = \bc(\bp)^\top \bA(\bp)^{-1} \bb(\bp)$ resembling the structure introduced in \eqref{eq:fstructure}. Typically, the dimension of the discretized space and thus the dimension of the matrix $\bA(\bp)$ is very large (possibly in the millions) and evaluating $\bff$ exactly for many different parameters $\bp$ is computationally demanding. Therefore, computationally efficient approximations of $\bff$ are crucial for solving many-query tasks such as design optimization, inverse problems or uncertainty quantification. In particular, prior simulations or real-world experiments may provide samples of $\bff$, enabling the application of data-driven rational approximation algorithms to compute fast, yet accurate reduced order models in terms of a rational function $\br \approx \bff$.  \\
	
	For our second problem we consider a parametrized linear time-invariant (LTI) system of the form
	\begin{equation}
		\label{eq:lti}
		\begin{aligned}
			\bE(\bp)\dot{\bx}(t,\bp) &= \bA(\bp) \bx(t,\bp) + \bb(\bp) \bu(t), \\
			\by(t,\bp) &= \bc(\bp)^\top \bx(t,\bp), 
		\end{aligned}
	\end{equation}
	where $\bu(t) \in \R$ is the input, $\by(t,\bp) \in \R$ is the output and $\bx(t,\bp) \in \R^n$ is the state of the system which depends on the parameter vector $\bp \in \C^{d-1}$. Taking the Laplace transformation of the state and output equation yields
	\begin{align}
        \left( s \bE(\bp) - \bA(\bp) \right) \cX(s,\bp) &= \bb(\bp) \cU(s), \label{eq:laplacestateeq} \\
        \cY(s,\bp) &= \bc(\bp)^\top \cX(s,\bp), \label{eq:laplaceoutputeq}
	\end{align}
	where $s\in\C$ is the Laplace variable and $\cU,\cY,\cX$ are the Laplace transformations of the input, output and state, respectively. We introduce the transfer function of the LTI system in \eqref{eq:lti} as
	\begin{equation}
		\label{eq:tf}
		\bH(s,\bp) = \bc(\bp)^\top (s \bE(\bp) - \bA(\bp))^{-1} \bb(\bp),
	\end{equation}
	and note that it fully describes the input output behavior in the Laplace domain via $\cY(s,\bp) = \bH(s,\bp) \cU(s)$. This can be seen by rearranging the Laplace-transformed state equation in \eqref{eq:laplacestateeq} and plugging it into the corresponding output equation in \eqref{eq:laplaceoutputeq}. The corresponding data-driven reduced order modeling problem consists of approximating $\bH$ based on a given set of transfer function samples originating from real-world experiments or numerical computations. Similar to the stationary model, we observe that $\bH$ obeys the form introduced for $\bff$ in \eqref{eq:fstructure} by defining $\bz = (s,\bp) \in \C^d$. \\
	
	We will provide specific examples for these problem settings in the numerical examples presented in Sections~\ref{sec:synthetic}, \ref{sec:thermalblock} and \ref{sec:massspringdamper}. For now, we will consider the problems described here as a baseline motivation for using rational approximation algorithms, but emphasize that there is a much wider range of problems in which multivariate rational approximation is applicable and effective. 
	
	\section{The p-AAA Algorithm for Multivariate Rational Approximation}
	\label{sec:pAAA}
	We begin this section by revisiting the properties of barycentric forms for the two variable ($d=2$) case as introduced in \eqref{eq:introbarycentricform}. Afterwards, we discuss the p-AAA algorithm for this setting. Finally, we show how p-AAA can be extended to the many-variable setting, i.e., $d>2$. 
    The Loewner matrices play a crucial role in p-AAA. 
    In this section we introduce a new representation for these higher-order Loewner matrix, which generalizes an earlier result from \cite{ionita_data-driven_2014} as we explain in detail below. This new representation allows for implementing higher-order Loewner matrices in an efficient and straightforward manner and plays a fundamental role in our proposed method which we introduce in Section~\ref{sec:lrpaaa}.

	\subsection{Multivariate Barycentric Forms}
    \label{sec:multivariatebarycentric}
	Various types of barycentric forms are used in univariate polynomial \cite{berrut_barycentric_2004} and rational \cite{berrut_lebesgue_1997} interpolation frameworks. In the multivariate setting, barycentric forms of rational functions are the backbone of several approximation algorithms as well \cite{deschrijver_stability_2008,antoulas_two-variable_2012,ionita_data-driven_2014,rodriguez_p-aaa_2023}. The success of barycentric forms is primarily explained by the fact that they allow for efficient and numerically stable function evaluation and approximation. To be able to motivate the ideas more clearly and concisely for the  many-variable case that we are interested in, here we first focus on the two-variable barycentric form introduced in \eqref{eq:introbarycentricform}, which we write as
	\begin{equation*}
		\br(\bz) = \frac{\bn(\bz)}{\bd(\bz)},
	\end{equation*}
	where
	\begin{equation}
		\label{eq:singularnd}
		\begin{aligned}
			\bn(\bz) &:= \sum_{i_1=1}^{n_1} \sum_{i_2=1}^{n_2} \frac{\alpha_{i_1 i_2} \bff\left(\lambda^{(1)}_{i_1},\lambda^{(2)}_{i_2}\right)}{\left(z^{(1)}-\lambda^{(1)}_{i_1}\right) \left(z^{(2)}-\lambda^{(2)}_{i_2}\right)}, \\
			\bd(\bz) &:= \sum_{i_1=1}^{n_1} \sum_{i_2=1}^{n_2} \frac{\alpha_{i_1 i_2} }{\left(z^{(1)}-\lambda^{(1)}_{i_1}\right) \left(z^{(2)}-\lambda^{(2)}_{i_2}\right)}.
		\end{aligned}
	\end{equation}
	We note that the functions $\bn$ and $\bd$ defined above are not polynomials but rather rational functions themselves. In order to write $\br$ as a ratio of two polynomials we introduce the multivariate Lagrange polynomials
	\begin{align}
		\bP(\bz) &= \prod_{i_1=1}^{n_1} \prod_{i_2=1}^{n_2} \left(z^{(1)}-\lambda^{(1)}_{i_1}\right) \left(z^{(2)}-\lambda^{(2)}_{i_2} \right) \bn(\bz) \notag \\
		&= \sum_{i_1=1}^{n_1} \sum_{i_2=1}^{n_2} \prod_{\hi_1 \neq i_1} \prod_{\hi_2 \neq i_2} \alpha_{i_1 i_2} \left(z^{(1)}-\lambda^{(1)}_{\hi_1}\right) \left(z^{(2)}-\lambda^{(2)}_{\hi_2}\right) \bff\left(\lambda^{(1)}_{i_1},\lambda^{(2)}_{i_2}\right), \quad \text{and} \notag \\
		\bQ(\bz) &= \prod_{i_1=1}^{n_1} \prod_{i_2=1}^{n_2} \left(z^{(1)}-\lambda^{(1)}_{i_1}\right) \left(z^{(2)}-\lambda^{(2)}_{i_2} \right) \bd(\bz) \label{eq:largrangedenom} \\
		&= \sum_{i_1=1}^{n_1} \sum_{i_2=1}^{n_2} \prod_{\hi_1 \neq i_1} \prod_{\hi_2 \neq i_2} \alpha_{i_1 i_2} \left(z^{(1)}-\lambda^{(1)}_{\hi_1}\right) \left(z^{(2)}-\lambda^{(2)}_{\hi_2}\right), \notag
	\end{align}
	which allows for writing
	\begin{equation}
		\label{eq:polyrepresentation}
		\br(\bz) = \frac{\bP(\bz)}{\bQ(\bz)}.
	\end{equation}
	This reveals that $\br$ is a ratio of two polynomials with degree $n_1-1$ in $z^{(1)}$ and $n_2-1$ in $z^{(2)}$. With the additional assumption that $\alpha_{i_1 i_2} \neq 0$ for $(i_1,i_2) \in \{ 1, \ldots, n_1\} \times \{ 1, \ldots, n_2 \}$, we have that $\br$ is a proper rational function. We call the tuple $(n_1-1,n_2-1)$ the order of $\br$. Additionally, the barycentric form in \eqref{eq:polyrepresentation} is interpolatory as it satisfies $\br(\blambda) = \bff(\blambda)$ for $\blambda = \left(\lambda^{(1)}_{i_1},\lambda^{(2)}_{i_2}\right) \in \blambda^{(1)} \times \blambda^{(2)}$, where
	\begin{equation}
		\label{eq:interpolationnodes}
		\blambda^{(1)} = \left\{ \lambda^{(1)}_1, \ldots, \lambda^{(1)}_{n_1} \right\} \subset \bZ^{(1)} \quad \text{and} \quad \blambda^{(2)} = \left\{ \lambda^{(2)}_1, \ldots, \lambda^{(2)}_{n_2} \right\} \subset \bZ^{(2)}.
	\end{equation}
	This property can easily be shown by noting that 
	\begin{align*}
		\bP\left(\lambda^{(1)}_{i_1},\lambda^{(2)}_{i_2}\right) &= \prod_{\hi_1 \neq i_1} \prod_{\hi_2 \neq i_2} \alpha_{i_1 i_2} \left(\lambda^{(1)}_{i_1} -\lambda^{(1)}_{\hi_1}\right) \left(\lambda^{(2)}_{i_2} -\lambda^{(2)}_{\hi_2}\right) \bff\left(\lambda^{(1)}_{i_1},\lambda^{(2)}_{i_2}\right), \\
		\bQ\left(\lambda^{(1)}_{i_1},\lambda^{(2)}_{i_2}\right) &= \prod_{\hi_1 \neq i_1} \prod_{\hi_2 \neq i_2} \alpha_{i_1 i_2} \left(\lambda^{(1)}_{i_1} -\lambda^{(1)}_{\hi_1}\right) \left(\lambda^{(2)}_{i_2} -\lambda^{(2)}_{\hi_2}\right),
	\end{align*}
	and thus $\br(\blambda) = \bP(\blambda) / \bQ(\blambda) = \bff(\blambda)$. Therefore, the barycentric nodes in $\blambda^{(1)} \times \blambda^{(2)}$ are interpolation nodes for the representation in \eqref{eq:introbarycentricform} and the sets of interpolation nodes for each variable are defined in \eqref{eq:interpolationnodes}. \\
	
	We note that strictly speaking the representations of $\br$ in \eqref{eq:introbarycentricform} and $\eqref{eq:polyrepresentation}$ are not equal everywhere in $\C^2$ as the form in \eqref{eq:introbarycentricform} has removable singularities at the interpolation points while the form in \eqref{eq:polyrepresentation} does not. We treat this discrepancy explicitly by slightly rewriting the barycentric form in \eqref{eq:introbarycentricform}. The rewritten form will also be useful for our discussions performed in proceeding sections. Towards this goal we first introduce the modified Cauchy matrix $\cC\left(\blambda^{(j)},\bZ^{(j)}\right) \in \C^{n_j \times N_j}$ as
	\begin{equation}
		\cC\left(\blambda^{(j)},\bZ^{(j)}\right)_{ik} := \begin{cases}
			\frac{1}{Z^{(j)}_k - \lambda^{(j)}_i} \quad &\text{if } Z^{(j)}_k \notin \blambda^{(j)} \\
			1  \quad &\text{if } Z^{(j)}_k = \lambda^{(j)}_i \\
			0  \quad &\text{else }
		\end{cases}
        \label{eq:modifiedCauchy}
	\end{equation}
	where $\bZ^{(j)} = \left\{ Z^{(j)}_1,\ldots,Z^{(j)}_{N_j}\right\} \subset \C$ and $\blambda^{(j)} = \left\{ \lambda^{(j)}_{1},\ldots,\lambda^{(j)}_{n_j} \right\} \subset \C$. Note that for a scalar input $x \notin \blambda^{(j)}$, we have 
    \begin{equation*}
        \cC\left(\blambda^{(j)},x\right) = \left[ \frac{1}{x - \lambda^{(j)}_1}, \ldots, \frac{1}{x - \lambda^{(j)}_{n_j}} \right]^\top \in \C^{n_j}.
    \end{equation*}
    and
    \begin{equation*}
        \cC\left(\blambda^{(j)},\blambda^{(j)}\right) = \bI_{n_j}.
    \end{equation*}
    Moreover, if $\blambda^{(j)} \cap \bZ^{(j)} = \emptyset$, then $\cC(\blambda^{(j)},\bZ^{(j)})$ is a standard Cauchy matrix. In general, $\cC(\blambda^{(j)},\bZ^{(j)})$ will be a Cauchy matrix where some rows have been replaced by unit vectors. In order to ease notation in the following, we define 
	\begin{equation*}
		\cC^{(j)} := \cC\left(\blambda^{(j)},\bZ^{(j)}\right) \quad \text{and} \quad \cC^{(j)}\left(x\right) := \cC\left(\blambda^{(j)},x\right).
	\end{equation*}
	Further, for a given function $\bff$, we introduce the data matrix $\D \in \C^{N_1 \times N_2}$ whose entries are given by
	\begin{equation*}
		\D_{i_1 i_2} = \bff\left(Z^{(1)}_{i_1}, Z^{(2)}_{i_2}\right) \quad \text{for} \quad  (i_1,i_2) \in \{ 1, \ldots, N_1\} \times \{ 1, \ldots, N_2 \}.
	\end{equation*}
	 Similarly, we define the matrix of interpolated data     
     at as $\H \in \C^{n_1 \times n_2}$ via 
    \begin{equation*}
        \H_{i_1 i_2} = \bff\left(\blambda^{(1)}_{i_1},\blambda^{(2)}_{i_2}\right) \quad \text{for} \quad  (i_1,i_2) \in \{ 1, \ldots, n_1\} \times \{ 1, \ldots, n_2 \}.
    \end{equation*}
    We now can write the barycentric form as a ratio of two inner products
	\begin{equation}
		\label{eq:barycentricform}
		\br(\bz) = \left[ \cC^{(1)}\left(z^{(1)}\right) \otimes \cC^{(2)}\left(z^{(2)}\right)\right]^\top \vectorize(\alpha \circ \H) \bigg/ \left[\cC^{(1)}\left(z^{(1)}\right) \otimes \cC^{(2)}\left(z^{(2)}\right)\right]^\top \vectorize(\alpha),
	\end{equation}
	where $\circ$ denotes the element-wise (Hadamard) product  and $\vectorize(\alpha) \in \C^{n_1 n_2}$ denotes the row-wise vectorization of the matrix $\alpha \in  \C^{n_1 \times n_2}$, i.e.,
     \begin{equation*}
         \vectorize(\alpha)_{n_2 (i_1 - 1) + i_2} = \alpha_{i_1 i_2} \quad \text{for} \quad i_1 = 1,\ldots,n_1 \quad \text{and} \quad i_2 = 1,\ldots,n_2.
     \end{equation*}
    This representation for $\br$ treats the removable singularities explicitly and is thus equal to the representation in \eqref{eq:polyrepresentation} everywhere in $\C^2$. Since this representation will be more useful in our upcoming discussion we redefine the functions $\bn$ and $\bd$ previously introduced in \eqref{eq:singularnd} via
	\begin{align}
		\label{eq:nd}
		\bn(\bz) &:= \left[ \cC^{(1)}\left(z^{(1)}\right) \otimes \cC^{(2)}\left(z^{(2)}\right)\right]^\top \vectorize(\alpha \circ \H) \quad \text{and} \\
		\label{eq:dd}
		\bd(\bz) &:= \left[\cC^{(1)}\left(z^{(1)}\right) \otimes \cC^{(2)}\left(z^{(2)}\right)\right]^\top \vectorize(\alpha).
	\end{align}
	
	\subsection{The p-AAA Algorithm}
	At its core, the p-AAA algorithm aims to compute a barycentric form as in $\eqref{eq:barycentricform}$ which approximates the data given in \eqref{eq:samples} by choosing appropriate sets of interpolation nodes $\blambda^{(1)},\blambda^{(2)}$ and the matrix of barycentric coefficients $\alpha$. As in the univariate AAA case, in the p-AAA algorithm these choices are updated in an iterative manner. The algorithm starts with no interpolation points and initializes $\br \equiv \operatorname{average}(\D)$ as a constant function. Then, p-AAA successively updates the interpolation sets $\blambda^{(1)}$ and $\blambda^{(2)}$ by adding new points via a greedy selection. This means in each iteration the algorithm evaluates the current approximant $\br$ over all sampling points to compute the maximum error
	\begin{equation}
		\label{eq:greedy}
		\left(\lambda^{(1)}_*,\lambda^{(2)}_*\right) = \argmax_{\substack{\bz \in \bZ^{(1)} \times \bZ^{(2)}}} \lvert \br(\bz) - \bff(\bz) \rvert.
	\end{equation}
	The interpolation points are then updated via
	\begin{equation*}
		\blambda^{(1)} \leftarrow \blambda^{(1)} \cup \left\{\lambda^{(1)}_*\right\} \qquad \text{and} \qquad \blambda^{(2)} \leftarrow \blambda^{(2)} \cup \left\{\lambda^{(2)}_*\right\}.
	\end{equation*}
	Note that the interpolation set for at least one variable is guaranteed to increase unless all data is fully interpolated. Hence, in each iteration of p-AAA the order of the rational approximant $\br$ increases. Aside from the interpolation points, p-AAA chooses $\alpha$ such that $\br$ approximates samples in $\bF$ which are not interpolated by solving a linear LS problem. We derive the LS problem in the following paragraph. \\

    A typical approach for approximating the data in $\bF$ by $\br$ would be to choose $\alpha$ such that the LS error
    \begin{equation}
        \label{eq:nonlinearLSerror}
        \widetilde{\bE}(\alpha) = \sum_{i_1=1}^{N_1} \sum_{i_2=1}^{N_2} \left( \bff\left(Z^{(1)}_{i_1},Z^{(2)}_{i_2}\right) - \br\left(Z^{(1)}_{i_1},Z^{(2)}_{i_2}\right) \right)^2
    \end{equation}
    is minimized (assuming that the barycentric nodes are fixed). Let us consider the approximation error at an arbitrary sample $\left(Z^{(1)},Z^{(2)}\right) \in \bZ^{(1)} \times \bZ^{(2)}$ in the data set
	\begin{align*}
		\bff\left(Z^{(1)},Z^{(2)}\right) - \br\left(Z^{(1)},Z^{(2)}\right) &= \frac{\bd\left(Z^{(1)},Z^{(2)}\right)}{\bd\left(Z^{(1)},Z^{(2)}\right)}\bff\left(Z^{(1)},Z^{(2)}\right) - \frac{\bn\left(Z^{(1)},Z^{(2)}\right)}{\bd\left(Z^{(1)},Z^{(2)}\right)} \\
		&= \frac{1}{\bd\left(Z^{(1)},Z^{(2)}\right)} \left( \bd\left(Z^{(1)},Z^{(2)}\right) \bff\left(Z^{(1)},Z^{(2)}\right) - \bn\left(Z^{(1)},Z^{(2)}\right) \right).
	\end{align*}
 We observe that the expression above and therefore also the LS error $\widetilde{\bE}(\alpha)$ have a nonlinear dependence on the entries of $\alpha$ due to the $1 / \bd\left(Z^{(1)},Z^{(2)}\right)$ term. Thus, the objective function in \eqref{eq:nonlinearLSerror} is nonlinear in the sought after barycentric coefficients. In order to avoid solving an expensive nonlinear LS problem, p-AAA instead minimizes an error expression which is linearized by dropping the $1 / \bd\left(Z^{(1)},Z^{(2)}\right)$ terms. The linearized LS error expression over all data thus reads
	\begin{align}
		\bE(\alpha) &= \sum_{i_1=1}^{N_1} \sum_{i_2=1}^{N_2} \left( \bd\left(Z^{(1)}_{i_1},Z^{(2)}_{i_2}\right) \bff\left(Z^{(1)}_{i_1},Z^{(2)}_{i_2}\right) - \bn\left(Z^{(1)}_{i_1},Z^{(2)}_{i_2}\right) \right)^2 \label{eq:lserrorsum} \\
		&= \sum_{i_1=1}^{N_1} \sum_{i_2=1}^{N_2} \bigg( \left[\cC^{(1)}\left(Z^{(1)}_{i_1}\right) \otimes \cC^{(2)}\left(Z^{(2)}_{i_2}\right)\right]^\top \vectorize(\alpha) \bff\left(Z^{(1)}_{i_1},Z^{(2)}_{i_2}\right) \nonumber \\
        & \hspace{6cm} - \left[\cC^{(1)}\left(Z^{(1)}_{i_1}\right) \otimes \cC^{(2)}\left(Z^{(2)}_{i_2}\right)\right]^\top \vectorize(\alpha \circ \H) \bigg)^2 \nonumber \\
		&= \left\lVert \vectorize(\D) \circ \left( \left[\cC^{(1)} \otimes \cC^{(2)}\right]^\top \vectorize(\alpha) \right) - \left[\cC^{(1)} \otimes \cC^{(2)}\right]^\top \vectorize(\alpha \circ \H) \right\rVert_2^2 \nonumber \\
		&= \bigg\lVert \bigg( \underbrace{ \customstrut{3ex} \diag\left(\vectorize\left(\D\right)\right) \left[\cC^{(1)} \otimes \cC^{(2)}\right]^\top - \left[\cC^{(1)} \otimes \cC^{(2)}\right]^\top \diag(\vectorize(\H))}_{=:\L_2 \in \C^{N_1 N_2 \times n_1 n_2}} \bigg) \vectorize(\alpha) \bigg\rVert_2^2. \label{eq:L2}
	\end{align}
	The above equation shows that the linearized LS error can be written concisely as $\bE(\alpha) = \lVert \L_2 \vectorize(\alpha) \rVert_2^2$, where $\L_2$ is the $2$-d Loewner matrix. We note that our definition of $\L_2$ slightly deviates from the one used in \cite{rodriguez_p-aaa_2023} as the Loewner matrix introduced here has $n_1 n_2$ additional rows corresponding to the error at interpolated values in $\blambda^{(1)} \times \blambda^{(2)}$. These rows are zero and therefore do not effect the error $\bE(\alpha)$. We note that we could recover the 2-D Loewner matrix 
    from~\cite{rodriguez_p-aaa_2023} by removing the zero rows. However, the newly introduced representation of the Loewner matrix comes in handy in various ways: The new definition of $\L_2$ has a simple extension to several variables as outlined in Section~\ref{sec:paaamanyparameters}. Additionally, our new formulation of the Loewner matrix can easily be implemented in a few lines of code even for the many variable case. Prior work has not made it clear how to elegantly construct Loewner matrices in such a setting, which makes the newly introduced representation important for practitioners. Finally, the new formulation for $\L_2$ will be crucial for our proposed algorithm as the structure will be exploited in our computational framework. We note that the structure for $\L_2$ which we introduced in \eqref{eq:L2} appears in a similar form in \cite{ionita_data-driven_2014}. The main difference is that in \cite{ionita_data-driven_2014} the matrix $\L_2$ is only based on errors over the subset $\big(\bZ^{(1)} \setminus \blambda^{(1)}\big) \times \big(\bZ^{(2)} \setminus \blambda^{(2)}\big)$ rather than the full set of sample data. That representation only allows for defining $\L_2$ based on standard Cauchy matrices rather than the modified ones we introduced in \eqref{eq:modifiedCauchy}. However, we would like to consider all errors in $\bZ^{(1)} \times \bZ^{(2)}$ in the p-AAA LS problem. Therefore, we utilize our newly introduced form for $\L_2$ which generalizes the form introduced in \cite{ionita_data-driven_2014}.  
	
	With the LS error expression in hand we consider the p-AAA LS problem
	\begin{equation}
		\label{eq:paaals}
		\min_{\fronorm{\alpha}=1} \lVert \L_2 \vectorize(\alpha) \rVert_2^2.
	\end{equation}
	Combining the LS problem above with the greedy selection outlined in \eqref{eq:greedy} yields the p-AAA algorithm which we summarize in Algorithm~\ref{alg:paaa}.
	\begin{algorithm}[t]
		\caption{p-AAA}\label{alg:paaa}
		\begin{algorithmic}[1]
			\Require{Sets of sampling points $\bZ^{(1)}$, $\bZ^{(2)}$, and data matrix $\D$}
			\Ensure{$\br \approx \bff$}
			\State{Initialize $\br \equiv \operatorname{average}(\D)$}
            \State{$\blambda^{(1)},\blambda^{(2)} \gets \emptyset$}
			\While{error $>$ desired tolerance}
			\State{Select $ \left( \lambda^{(1)}_*, \lambda^{(2)}_* \right) $ via a greedy selection}
			\State Update the interpolation sets:
			\State\hspace{\algorithmicindent} $\blambda^{(1)} \gets \blambda^{(1)} \cup \{ \lambda^{(1)}_* \}$
			\State\hspace{\algorithmicindent} $\blambda^{(2)} \gets \blambda^{(2)} \cup \{ \lambda^{(2)}_* \}$
			\State{Solve $ \min \lVert \L_2 \vectorize(\alpha) \rVert_2 $ s.t. $\fronorm{\alpha}=1$}
			\State{Use $\alpha$ to update the rational approximant $\br$}
			\State{error $ \gets \max_{i_1,i_2} \; \left\lvert \D_{i_1 i_2} - \br\left(Z^{(1)}_{i_1},Z^{(2)}_{i_2}\right) \right\vert / \left\lvert \D_{i_1 i_2} \right\rvert $}
			\EndWhile
		\end{algorithmic}
	\end{algorithm}
	We note that the constraint $\fronorm{\alpha} = \lVert \vectorize(\alpha) \rVert_2 = 1$ is present in order to avoid zero coefficients and the trivial solution $\alpha = 0$. Further, the constrained LS problem has a closed-form solution given by the right singular vector of $\L_2$ which relates to the smallest singular value of $\L_2$. By using standard SVD algorithms the number of required floating point operations to solve \eqref{eq:paaals} is in $\cO(N_1 N_2 n_1^2 n_2^2)$. \\
    
    We note that if $\L_2$ has a nullspace the rational function $\br$ becomes the so-called Loewner interpolant \cite{ionita_data-driven_2014}, which exactly recovers the full set of sampling data. Importantly, if $\bff$ is a rational function and sufficient data is available, p-AAA will either compute a rational approximant with a desired error tolerance or recover $\bff$ exactly. In the parametric Loewner framework the order $(n_1-1,n_2-1)$ that allows for recovering the entire sample data set is determined a priori via computing the ranks of several $1$-$D$ Loewner matrices. A key computational difference between the p-AAA and Loewner frameworks is therefore that the Loewner framework computes the ranks of many small and the nullspace of one large Loewner matrix, while p-AAA solves several linear LS problems involving Loewner matrices with increasing dimensions. We will not go into detail about further differences and variants of the Loewner approach but briefly comment on the recently proposed extension of the parametric Loewner framework from \cite{antoulas_loewner_2024}. The goal of the extension is the same as our goal in this paper: Adjust an established multivariate rational approximation algorithm such that it can be applied to problems that involve many variables. The main idea in \cite{antoulas_loewner_2024} is to exploit the fact that the nullspace of $\L_2$ has a recursive structure which can be used in an efficient algorithm for computing the barycentric coefficient matrix $\alpha$. In the p-AAA setting however, $\L_2$ will usually not have a nullspace and it is unclear how to incorporate the results established in \cite{antoulas_loewner_2024} into Algorithm~\ref{alg:paaa}. As will become clear in Sections~\ref{sec:lrbarycentric} and \ref{sec:lrpaaa} we use a completely different approach to tackle the curse of dimensionality. 
	
	\subsection{p-AAA for Many Variables}
	\label{sec:paaamanyparameters}
	Our goal for this section is to extend the p-AAA discussion, presented for the two variable-case above, to the general multivariate approximation setting with $d\in\N$ variables given by $\bz = \left( z^{(1)},\ldots,z^{(d)}\right) \in \C^d$. In this setting many of the underlying matrix structures are replaced by higher-order tensors. We begin by considering the sampling points
	\begin{equation}
		\label{eq:dsamplingpoints}
		\bZ^{(j)} = \left\{ Z^{(j)}_1,\ldots, Z^{(j)}_{N_j} \right\} \subset \C \quad \text{for} \quad j=1,\ldots,d
	\end{equation}
	and the sample tensor $\D \in \C^{N_1 \times \cdots \times N_d}$ given by
	\begin{equation}
		\label{eq:sampletensor}
		\D_{i_1\ldots i_d} = \bff\left(Z^{(1)}_{i_1},\ldots,Z^{(d)}_{i_d}\right) \quad \text{for} \quad i_j = 1,\ldots,N_j \quad \text{and} \quad j = 1,\ldots,d.
	\end{equation}
	By introducing the sets of interpolation nodes
	\begin{equation}
		\label{eq:dinterpolationnodes}
		\blambda^{(j)} = \left\{ \lambda^{(j)}_1,\ldots,\lambda^{(j)}_{n_j} \right\} \subset \bZ^{(j)} \quad \text{for} \quad j=1,\ldots,d
	\end{equation}
    we can define the tensor of interpolated data $\H \in \C^{n_1 \times \cdots \times n_d}$ whose entries are given by
    \begin{equation*}
        \H_{i_1\ldots i_d} = \bff\left(\lambda^{(1)}_{i_1},\ldots,\lambda^{(d)}_{i_d}\right) \quad \text{for} \quad i_j = 1,\ldots,n_j \quad \text{and} \quad j = 1,\ldots,d.
    \end{equation*}
	The newly established barycentric form~\eqref{eq:barycentricform} now reads as
	\begin{equation}
		\label{eq:manyparambarycentric}
		\br(\bz) = \left[ \cC^{(1)}\left(z^{(1)}\right) \otimes \cdots \otimes \cC^{(d)}\left(z^{(d)}\right)\right]^\top \vectorize(\alpha \circ \H) \Big/ \left[\cC^{(1)}\left(z^{(1)}\right) \otimes \cdots \otimes \cC^{(d)}\left(z^{(d)}\right) \right]^\top \vectorize(\alpha),
	\end{equation}
	where the barycentric coefficients are given in terms of the tensor $\alpha \in \C^{n_1 \times \cdots \times n_d}$. The main adjustment in the p-AAA algorithm is that the LS problem now reads
	\begin{equation}
		\label{eq:manyvariablepaaals}
		\min_{\fronorm{\alpha} = 1}\lVert \L_d \vectorize(\alpha) \rVert_2^2,
	\end{equation}
	where $\L_d \in \C^{N_1 \cdots N_d \times n_1 \cdots n_d}$ is the $d$-dimensional Loewner matrix given by
	\begin{equation}
		\label{eq:Ld}
		\L_d = \diag(\vectorize(\D)) \left[\cC^{(1)} \otimes \cdots \otimes \cC^{(d)}\right]^\top -  \left[\cC^{(1)} \otimes \cdots \otimes \cC^{(d)}\right]^\top \diag(\vectorize(\H)).
	\end{equation}
    Note that for a tensor $\fronorm{\cdot}$ simply denotes the standard norm for tensors given by $\fronorm{\alpha} = \lVert \vectorize(\alpha) \rVert_2$. Solving the LS problem that involves $\L_d$ via a standard SVD algorithm requires $\cO(N_1\cdots N_d n_1^2\cdots n_d^2)$ operations. Especially for problems that either require a high number of interpolation points $n_1 \cdots n_d$ or depend on many variables, solving the underlying LS problem will be computationally prohibitive.  A main contribution of this manuscript is the introduction of a new representation for the multivariate barycentric form, utilizing separable functions, as detailed in Section~\ref{sec:lrbarycentric}. This representation will leverage a low-rank decomposition of the tensor $\alpha$. Then, we present our main contribution by showing how the new low-rank barycentric forms can be incorporated into the p-AAA algorithm, as discussed in Section~\ref{sec:lrpaaa}. This will significantly reduce the computational complexity of the underlying LS problem and allow for applying the p-AAA framework in situations where a greater number of variables is present.
	
	\section{Separable Functions and Barycentric Forms}
	\label{sec:lrbarycentric}
	In this section we discuss representations of barycentric forms based on separable functions and their connection to low-rank tensor decompositions. These representations will serve as the foundation for our primary contribution, the low-rank p-AAA algorithm in
    Section~\ref{sec:lrpaaa}.
	
	\subsection{Low-Rank Barycentric Coefficients}
	\label{sec:separablerep}
	We begin with a canonical (CP) decomposition \cite{kolda_tensor_2009} for the tensor $\alpha \in \C^{n_1 \times \cdots \times n_d}$ given by
	\begin{equation}
		\label{eq:cpalpha}
		\vectorize(\alpha) =  \sum_{k=1}^r \beta^{(1)}_{k} \otimes \cdots \otimes \beta^{(d)}_{k}.
	\end{equation}
	In this representation~\eqref{eq:cpalpha} we call the matrices $\beta^{(j)} = \left[\beta^{(j)}_1,\ldots,\beta^{(j)}_r \right] \in \C^{n_j \times r}$ for $j=1,\ldots,d$ CP factors and $r$ the size of the CP decomposition. Additionally, the rank of a tensor is defined as the minimal size of a CP decomposition that exactly represents that tensor. We note that storing $\alpha$ requires $n_1 \cdots n_d$ values, whereas storing the CP factors involves $r(n_1 + \cdots + n_d)$ values. Hence, if $r$ is small compared to $n_1,\ldots,n_d$ storing the CP factors rather than the full tensor is significantly more memory efficient. In this case, we call the CP decomposition low-rank. For now, we will assume that some CP decomposition for $\alpha$ as above is given and only reveal in Section~\ref{sec:lrpaaa} how the CP factors are computed in the context of a low-rank version of the p-AAA algorithm. Before we analyze the implications of introducing low-rankness to the barycentric form, we first note a simple connection between the CP decomposition and low-rank matrix decompositions. Specifically, for the two variable case we have $\alpha \in \C^{n_1 \times n_2}$ and
	\begin{equation*}
		\vectorize(\alpha) = \sum_{k=1}^r \beta^{(1)}_{k} \otimes \beta^{(2)}_{k} \qquad \iff \qquad \alpha = \beta^{(1)} \\ {\beta^{(2)}}^\top.
	\end{equation*}
	Hence, the CP factors simply become factors in an outer product representation for $\alpha$. Next, we establish how this connects to separable functions in barycentric forms. Let us consider the denominator of the barycentric form in \eqref{eq:manyparambarycentric} in conjunction with the CP decomposition in \eqref{eq:cpalpha} for $\alpha$:
	\begin{align*}
		\bd(\bz) &= \left[\cC^{(1)}\left(z^{(1)}\right) \otimes \cdots \otimes \cC^{(d)}\left(z^{(d)}\right)\right]^\top \vectorize(\alpha) \\ 
		&= \sum_{k=1}^r \left[\cC^{(1)}\left(z^{(1)}\right) \otimes \cdots \otimes \cC^{(d)}\left(z^{(d)}\right)\right]^\top \left(\beta^{(1)}_{k} \otimes \cdots \otimes \beta^{(d)}_{k}\right) \\
		&= \sum_{k=1}^r \left(\cC^{(1)}\left(z^{(1)}\right)^\top \beta^{(1)}_{k} \right) \cdots \left(\cC^{(d)}\left(z^{(d)}\right)^\top \beta^{(1)}_{k} \right).
	\end{align*}
	We note that in the above representation $\bd(\bz)$ is now formed by a sum of $r$ products of $d$ univariate scalar functions $\cC^{(j)}\left(z^{(j)}\right)^\top \beta^{(j)}_{k} \in \C$. We call multivariate functions that can be represented by a product of univariate functions in the individual function arguments separable. Further, we call $r$ the separation rank of $\bd$ if the number of separable summands that represent $\bd$ is minimal. The benefit of this representation is that it only requires $\cO(dr(n_1+\cdots+n_d))$ multiplications to evaluate it in contrast to the $\cO(n_1\cdots n_d)$ multiplications required for the standard barycentric form. We note that the barycentric coefficients $\alpha$ are also the coefficients for the Lagrange basis functions that form the polynomial $\bQ(\bz)$ in \eqref{eq:largrangedenom}. In particular, we can perform the same derivation as we did for $\bd$ above and write $\br = \bP / \bQ$ with
	\begin{equation}
		\label{eq:separableQ}
		\bQ(\bz) = \sum_{k=1}^r \left(L^{(1)}\left(z^{(1)}\right)^\top \beta^{(1)}_{k} \right) \cdots \left(L^{(d)}\left(z^{(d)}\right)^\top \beta^{(1)}_{k} \right)
	\end{equation}
	where
	\begin{equation*}
		L^{(j)}\left(z^{(j)}\right) = \left[ \prod_{i_j \neq 1} \left( z^{(j)} - \lambda_{i_j}^{(j)}\right),\ldots, \prod_{i_j \neq d} \left( z^{(j)} - \lambda_{i_j}^{(j)}\right) \right]^\top \quad \text{for} \quad j=1,\ldots,d.
	\end{equation*}
	This representation reveals that low-rank barycentric coefficients allow for representing rational functions whose denominator is a sum of a small number of separable polynomials. \\
	
	Next, we consider the numerator of the barycentric representation in \eqref{eq:manyparambarycentric}. Here the situation is slightly different, as we need to consider the element-wise product $\vectorize(\alpha \circ \H)$ instead of just $\vectorize(\alpha)$. To obtain a separable representation for the numerator, we first assume that the tensor of interpolated samples $\H \in \C^{n_1 \times \cdots \times n_d}$ admits a CP decomposition of the form
	\begin{equation}
		\label{eq:cpH}
		\vectorize(\H) = \sum_{\ell=1}^{q} \gamma^{(1)}_{\ell} \otimes \cdots \otimes \gamma^{(d)}_{\ell}.
	\end{equation}
	Then, we can write the numerator as
	\begin{align*}
		\bn(\bz)  &= \left[\cC^{(1)}\left(z^{(1)}\right) \otimes \cdots \otimes \cC^{(d)}\left(z^{(d)}\right)\right]^\top \vectorize(\alpha \circ \H) \\ 
		&= \sum_{k=1}^r \sum_{\ell=1}^q\left[\cC^{(1)}\left(z^{(1)}\right) \otimes \cdots \otimes \cC^{(d)}\left(z^{(d)}\right)\right]^\top \left[ \left(\beta^{(1)}_{k} \otimes \cdots \otimes \beta^{(d)}_{k}\right) \circ \left(\gamma^{(1)}_{\ell} \otimes \cdots \otimes \gamma^{(d)}_{\ell}\right) \right]\\
		&= \sum_{k=1}^r \sum_{\ell=1}^q \left(\cC^{(1)}\left(z^{(1)}\right)^\top \left[\beta^{(1)}_{k} \circ \gamma^{(1)}_{\ell} \right]\right) \cdots \left(\cC^{(d)}\left(z^{(d)}\right)^\top \left[\beta^{(d)}_{k} \circ \gamma^{(d)}_{\ell} \right] \right).
	\end{align*}
	The representation above shows that $\bn$ has a separation rank which is bounded from above by $rq$. In practice, $\H$ will typically not be exactly low-rank and conditions under which $\H$ can be well-approximated by a low-rank tensor would need to be investigated carefully. As will become clear in Section~\ref{sec:lrpaaa} the rank of $\H$ will not be a concern for us. In particular, we achieve a significantly improved computational complexity in our proposed algorithm by focusing solely on low-rank representations of $\alpha$.

	\subsection{Motivation for Low-Rank Structures}
	The main motivation for using low-rank barycentric coefficients in the context of p-AAA is that they can be computed efficiently, as discussed in the next section. It is worth pointing out that the CP and several other types of tensor decomposition have long been the cornerstone of multivariate approximation algorithms \cite{grasedyck_literature_2013} and considering them in the context of p-AAA is therefore a natural next step. In order to provide additional motivation, we consider here two classes of problems for which separable and therefore also low-rank structures appear naturally. While the discussion in this section is considering exact separable structures, we note that we typically use representations based on separable functions as an approximation rather than exact representations. The model structure introduced in Lemma~\ref{lemma:blockseparablef} is particularly relevant for the parametric stationary PDE setting introduced in Section~\ref{sec:motivatingproblems}. First, we consider a key fact that applies to the approximation of an inherently separable function.
	\begin{lemma}
		\label{lemma:rank1data}
     For $j=1,\ldots,d$, consider the sampling points $\bZ^{(j)}$ in \eqref{eq:dsamplingpoints} and the interpolation nodes $\blambda^{(j)}$ in \eqref{eq:dinterpolationnodes}. Let $\D$ be the corresponding sample tensor in \eqref{eq:sampletensor} and $\L_d$ be the associated higher-order Loewner matrix in \eqref{eq:Ld}. If $\rk(\D) = 1$ and there exists $\alpha \neq 0$ such that $\L_d \vectorize(\alpha) = 0$, then we can choose $\alpha$ such that $\rk(\alpha) = 1$.
	\end{lemma}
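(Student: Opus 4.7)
The plan is to exploit the rank-one hypothesis on $\D$ to rewrite $\L_d$ as a difference of Kronecker products, and then construct a rank-one nullspace element from generalized eigenvectors of the resulting univariate pencils. First, I use $\rk(\D) = 1$ to write $\D = d^{(1)} \otimes \cdots \otimes d^{(d)}$ for some $d^{(j)} \in \C^{N_j}$. Because the interpolated tensor $\H$ is the restriction of $\D$ to the indices associated with $\blambda^{(j)} \subset \bZ^{(j)}$, it inherits this structure as $\H = h^{(1)} \otimes \cdots \otimes h^{(d)}$, where $h^{(j)} \in \C^{n_j}$ is the corresponding restriction of $d^{(j)}$. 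Applying $\diag(u \otimes v) = \diag(u) \otimes \diag(v)$ together with the mixed-product rule for Kronecker products to~\eqref{eq:Ld} yields the factorization
\[ \L_d = A^{(1)} \otimes \cdots \otimes A^{(d)} - B^{(1)} \otimes \cdots \otimes B^{(d)}, \]
with $A^{(j)} := \diag(d^{(j)}) \cC^{(j)\top}$ and $B^{(j)} := \cC^{(j)\top} \diag(h^{(j)})$.

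Next I consider a rank-one candidate $\alpha = a^{(1)} \otimes \cdots \otimes a^{(d)}$. The mixed-product rule again gives
\[ \L_d \vectorize(\alpha) = (A^{(1)} a^{(1)}) \otimes \cdots \otimes (A^{(d)} a^{(d)}) - (B^{(1)} a^{(1)}) \otimes \cdots \otimes (B^{(d)} a^{(d)}), \]
and two nonzero pure Kronecker products agree iff their factors agree up to scalars whose product equals one. So this expression vanishes iff there exist nonzero scalars $\mu_1, \ldots, \mu_d$ with $\prod_j \mu_j = 1$ and $A^{(j)} a^{(j)} = \mu_j B^{(j)} a^{(j)}$ for each $j$; that is, each $a^{(j)}$ must be a generalized eigenvector of the univariate pencil $(A^{(j)}, B^{(j)})$ with eigenvalue $\mu_j$. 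This turns the rank-one question into a question about compatible families of generalized eigenpairs.

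It remains to produce such a compatible family from the hypothesis $\ker(\L_d) \neq \{0\}$. The key algebraic fact I would invoke is that the generalized eigenvalues of the Kronecker pencil $(A^{(1)} \otimes \cdots \otimes A^{(d)},\, B^{(1)} \otimes \cdots \otimes B^{(d)})$ are exactly the products $\prod_j \mu_j$ of generalized eigenvalues of the individual factor pencils, with corresponding eigenvectors being Kronecker products of the individual eigenvectors; this can be made rigorous by simultaneously reducing each pair $(A^{(j)}, B^{(j)})$ to triangular form through the QZ decomposition and then reading off the eigenvalue structure of the resulting Kronecker product of triangular pencils. Since $\L_d$ has a nontrivial kernel, $1$ is a generalized eigenvalue of the Kronecker pencil, and hence there must exist compatible $\mu_j$ and $a^{(j)}$, after which $\alpha = a^{(1)} \otimes \cdots \otimes a^{(d)}$ is the desired rank-one nullspace element. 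The main obstacle lies precisely in this last step: the pencils $(A^{(j)}, B^{(j)})$ are rectangular ($N_j \times n_j$ with typically $N_j > n_j$) and may be singular, so the argument has to handle generalized eigenvalues at infinity and any singular blocks; a careful invocation of the Kronecker canonical form, or an appropriate square reduction followed by QZ, is what I expect to need to make the decomposition of the Kronecker pencil's eigenvalues airtight in full generality.
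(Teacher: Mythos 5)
Your opening moves are sound: since $\blambda^{(j)}\subset\bZ^{(j)}$, the tensor $\H$ is a subtensor of $\D$ and does inherit the rank-one structure, and the factorization $\L_d=A^{(1)}\otimes\cdots\otimes A^{(d)}-B^{(1)}\otimes\cdots\otimes B^{(d)}$ with $A^{(j)}=\diag\left(d^{(j)}\right)\cC^{(j)\top}$ and $B^{(j)}=\cC^{(j)\top}\diag\left(h^{(j)}\right)$ follows correctly from the mixed-product rule. The reduction of the rank-one question to finding generalized eigenvectors $a^{(j)}$ of the univariate pencils $\left(A^{(j)},B^{(j)}\right)$ with eigenvalues multiplying to one is also a correct characterization of when a \emph{pure} tensor lies in $\ker(\L_d)$. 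The gap is exactly where you flag it, and it is not a technicality you can defer: the statement ``$\ker(\L_d)\neq\{0\}$ implies a compatible family of eigenpairs exists'' is the entire content of the lemma in your formulation, and the product-of-eigenvalues fact you invoke is a theorem about \emph{regular square} pencils. Here the pencils are $N_j\times n_j$ with $N_j\geq n_j$; a generic rectangular pencil with more rows than columns has no generalized eigenvalues at all (the pencil has full column rank for every $\mu$, finite or infinite), so there is no a priori spectral decomposition of the Kronecker pencil to read the eigenvalue $1$ off from. A nonzero kernel vector of $\L_d$ need not be a pure tensor, and nothing in your argument rules out the possibility that the kernel contains no pure tensors; the Kronecker canonical form with its singular blocks does not by itself deliver the factorized eigenvector you need. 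You would have to exploit the specific Cauchy/Loewner structure of $A^{(j)}$ and $B^{(j)}$ to close this, at which point the spectral framing stops doing work.

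The paper's proof sidesteps all of this and is worth comparing against. It observes that $\L_d\vectorize(\alpha)=0$ means the barycentric rational $\br=\bn/\bd$ built from $\alpha$ recovers every entry of $\D$ exactly. It then defines the univariate slices $\br^{(j)}\left(z^{(j)}\right)=\br\left(Z^{(1)}_1,\ldots,z^{(j)},\ldots,Z^{(d)}_1\right)$, each of which interpolates at $\blambda^{(j)}$ and hence admits a univariate barycentric representation with some coefficient vector $\alpha^{(j)}\in\C^{n_j}$. Using $\rk(\D)=1$ and normalizing by $\D_{1\ldots1}^{d-1}$ (after arranging $\D_{1\ldots1}\neq 0$), the product $\tilde{\br}(\bz)=\br^{(1)}\left(z^{(1)}\right)\cdots\br^{(d)}\left(z^{(d)}\right)/\D_{1\ldots1}^{d-1}$ is shown to recover $\D$ exactly as well, and multiplying out the univariate barycentric forms exhibits $\tilde{\br}$ as the multivariate barycentric form with coefficient tensor $\alpha^{(1)}\otimes\cdots\otimes\alpha^{(d)}$, which therefore satisfies $\L_d\left(\alpha^{(1)}\otimes\cdots\otimes\alpha^{(d)}\right)=0$. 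This constructs the rank-one kernel element explicitly from the data rather than trying to extract it from the spectral theory of a singular rectangular pencil. If you want to salvage your approach, you would essentially need to prove that each slice pencil has the required eigenvector, which amounts to rediscovering the paper's univariate-interpolant construction.
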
%
	{The proof of~\Cref{lemma:rank1data} can be found in Appendix~\ref{appendix:proofrank1lemma}. The lemma states that rank-$1$ data that can be exactly recovered by a rational function can, in particular, be exactly recovered by a separable rational function. Note that the data matrix $\D$ has rank-$1$ if the function $\bff$ is separable to begin with. This will be the case,  for example, if the parameter dependence of an LTI model as in \eqref{eq:lti} with $\bp = \left(p^{(1)},p^{(2)}\right) \in \C^2$ is restricted to the inputs and outputs via $\bb\left(p^{(1)}\right)$ and $\bc\left(p^{(2)}\right)$. For this example the transfer function reads
    \begin{equation*}
        \bH\left(s,p^{(1)},p^{(2)}\right) = \bc\left(p^{(2)}\right)^\top \left(s\bE - \bA\right)^{-1} \bb\left(p^{(1)}\right)
    \end{equation*}
    and is clearly separable.} For such functions it is wise to exploit the structure for example by enforcing the barycentric coefficients of a rational approximant to be rank-$1$. Next, we introduce another class of functions which exhibit a separable structure.
	\begin{lemma}
		\label{lemma:blockseparablef}
		Let $\bz = \left(z^{(1)},z^{(2)}\right) \in \C^2$ and
		\begin{equation*}
			\bff(\bz) = \bc^\top \bK(\bz)^{-1} \bb
		\end{equation*}
		with
		\begin{equation}
			\label{eq:blockK}
			\bK(\bz) = \begin{bmatrix}
				z^{(1)} \bK_{11} & \bK_{12} \\
				\bK_{21} & z^{(2)} \bK_{22}
			\end{bmatrix} \in \C^{(m_1 + m_2) \times (m_1 + m_2)},
		\end{equation}
		where $\bK_{11} \in \C^{m_1 \times m_1}$ and $\bK_{22} \in \C^{m_2 \times m_2}$ are invertible matrices. Further, let $r = \min\{\rk(\bK_{12}),\allowbreak \rk(\bK_{21})\}$ be the minimum of the ranks of the off-diagonal blocks $\bK_{12},\bK_{21}^\top \in \C^{m_1 \times m_2}$. Then $\bff$ is a rational function 
		\begin{equation*}
			\bff(\bz) = \frac{p(\bz)}{q(\bz)},
		\end{equation*}
		where $p,q$ are polynomials and $q$ can be written as
		\begin{equation*}
			q(\bz) = \sum_{\ell=1}^{r+1} q^{(1)}_{\ell}\left(z^{(1)}\right) q^{(2)}_{\ell}\left(z^{(2)}\right) 
		\end{equation*}
		with polynomials $q^{(1)}_{\ell}, q^{(2)}_{\ell}$ for $\ell=1,\ldots,r+1$. Hence, the separation rank of the denominator $q$ is less than or equal to $r+1$. 
	\end{lemma}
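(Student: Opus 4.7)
My plan is to show the bound on the separation rank of $q$ by exhibiting a concrete choice of $q$, namely $q(\bz) = \det(\bK(\bz))$. Since $\bff(\bz) = \bc^\top \bK(\bz)^{-1} \bb = (\bc^\top \operatorname{adj}(\bK(\bz)) \bb)/\det(\bK(\bz))$ is a ratio of polynomials with denominator $\det(\bK(\bz))$ (up to cancellation), it suffices to show that $\det(\bK(\bz))$ itself has separation rank at most $r+1$.

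The key tool will be the block (Schur complement) determinant identity. Since $\bK_{11}$ is invertible, $z^{(1)} \bK_{11}$ is invertible as a matrix over the field of rational functions in $z^{(1)}$, so
\begin{equation*}
    \det(\bK(\bz)) = \det\!\left(z^{(1)} \bK_{11}\right) \det\!\left(z^{(2)} \bK_{22} - \bK_{21} (z^{(1)} \bK_{11})^{-1} \bK_{12}\right) = (z^{(1)})^{m_1} \det(\bK_{11}) \det(\bK_{22}) \det\!\left(z^{(2)} \bI - \tfrac{1}{z^{(1)}} \bM\right),
\end{equation*}
where $\bM := \bK_{22}^{-1} \bK_{21} \bK_{11}^{-1} \bK_{12} \in \C^{m_2 \times m_2}$. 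The crucial observation is that $\rk(\bM) \leq \min\{\rk(\bK_{12}), \rk(\bK_{21})\} = r$ because $\bK_{11}^{-1}$ and $\bK_{22}^{-1}$ are invertible and hence rank-preserving. Therefore $\bM$ has at most $r$ nonzero eigenvalues, so its characteristic polynomial truncates:
\begin{equation*}
    \det(\lambda \bI - \bM) = \sum_{i=0}^{r} (-1)^i \, e_i(\bM) \, \lambda^{m_2 - i},
\end{equation*}
where $e_i(\bM)$ is the $i$-th elementary symmetric polynomial in the eigenvalues of $\bM$ (with $e_0 = 1$ and $e_i = 0$ for $i>r$).

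Substituting $\lambda = z^{(2)}$ into $\det(z^{(2)}\bI - \tfrac{1}{z^{(1)}} \bM) = (z^{(1)})^{-m_2}\det((z^{(1)} z^{(2)})\bI - \bM)$ and multiplying by $(z^{(1)})^{m_1}$ gives, after a short bookkeeping computation,
\begin{equation*}
    \det(\bK(\bz)) = \det(\bK_{11}) \det(\bK_{22}) \sum_{i=0}^{r} (-1)^i e_i(\bM) \, (z^{(1)})^{m_1 - i} (z^{(2)})^{m_2 - i},
\end{equation*}
which is exactly a sum of $r+1$ separable terms of the required form $q^{(1)}_\ell(z^{(1)}) q^{(2)}_\ell(z^{(2)})$. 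This yields the claimed bound on the separation rank.

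The main technical wrinkle, and the step that deserves care, is the use of the Schur complement identity even though $z^{(1)} \bK_{11}$ is singular at $z^{(1)} = 0$. Both sides of the derived identity are polynomials in $z^{(1)}, z^{(2)}$ (the left side clearly so, the right side after multiplication clears the $(z^{(1)})^{-m_2}$ factor using the degree structure of the truncated characteristic polynomial), and they agree on the Zariski-open set $\{z^{(1)} \neq 0\}$, so they agree everywhere as polynomials. Beyond this, the argument is routine linear algebra.
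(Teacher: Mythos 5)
Your proof is correct and follows essentially the same route as the paper's: identify $q = \det(\bK(\bz))$, apply a block-determinant (Schur complement) factorization, and exploit the rank bound $\rk\left(\bK_{21}\bK_{11}^{-1}\bK_{12}\right) \le r$ to conclude that the determinant is, up to separable monomial factors, a degree-$r$ polynomial in the product $z^{(1)}z^{(2)}$, hence of separation rank at most $r+1$. The only differences are cosmetic --- you take the Schur complement with respect to the $(1,1)$ block and invoke the truncation of the characteristic polynomial of a rank-$\le r$ matrix, whereas the paper complements on the $(2,2)$ block and applies the low-rank determinant update (Sylvester's identity) to reduce to an $r \times r$ determinant --- and your explicit treatment of the removable singularity at $z^{(1)}=0$ via a density argument addresses a point the paper leaves implicit.
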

	The proof of Lemma~\ref{lemma:blockseparablef} can be found in Appendix~\ref{appendix:proofblockKlemma}. The lemma establishes a class of rational functions that exhibit a separable structure in their denominator polynomial and thus in their barycentric coefficients if represented in barycentric form. In particular, the separation rank is bounded by the rank of the off-diagonal blocks $r$ of the matrix $\bK(\bz)$ and is independent of the order of the rational function given by $(m_1, m_2)$. Our result can easily be extended to $d>2$ variables, where
	\begin{equation*}
		\bK(\bz) = \begin{bmatrix}
			z^{(1)} \bK_{11} & \cdots & \bK_{1d} \\
			\vdots & \ddots & \vdots \\
			\bK_{d1} & \cdots & z^{(d)} \bK_{dd}
		\end{bmatrix}.
	\end{equation*}
	In this case the separation rank of the denominator of $\bff$ will be bounded by $(r+1)^{d(d-1)/2}$ where $r$ is the minimum rank of all off-diagonal blocks of $\bK(\bz)$ whose rank is non-zero. {This bound can in many cases be improved by taking into account additional structure in the matrix $\bK(\bz)$. For example, if $\bK(\bz)$ is block tridiagonal we can bound the separation rank by $(r+1)^{d-1}$.} Most importantly, we note that this class of rational functions commonly appears in the context of spatially discretized parametric PDEs. In particular, if the effects of the parameters (here $z^{(1)},\ldots,z^{(d)}$) are in a sense local and independent, $\bK(\bz)$ will have the exact structure introduced above. An example for such a model is considered in our numerical experiment discussed in Section~\ref{sec:thermalblock}.
    
	\section{The Low-Rank p-AAA Algorithm}
	\label{sec:lrpaaa}
	In this section we present our main contribution in the form of the low-rank p-AAA algorithm. As before, we begin by primarily focusing on the  $d=2$ case in order to make the main ideas as clear as possible. We extend the result to the case $d>2$ afterwards and discuss various practical aspects of the proposed algorithm.
    
	The first goal is to incorporate the previously introduced CP decomposition from Section~\ref{sec:lrbarycentric} for the barycentric coefficients into the p-AAA algorithm. We propose to achieve this goal by making the structure in \eqref{eq:cpalpha} a constraint for the LS optimization problem in \eqref{eq:paaals} appearing in p-AAA. The proposed algorithm, which we call low-rank p-AAA, therefore considers the constrained optimization problem
	\begin{equation*}
		\min_{\fronorm{\alpha} =1} \lVert \L_2 \vectorize(\alpha) \rVert_2^2 \quad \text{s.t.} \quad \vectorize(\alpha) = \sum_{k=1}^r \beta^{(1)}_{k} \otimes \beta^{(2)}_{k}
	\end{equation*} 
    in place of~\eqref{eq:paaals} in p-AAA. The greedy selection strategy~\eqref{eq:greedy} proceeds as before. 
    
	Plugging the low-rank constraint for $\vectorize(\alpha)$ into the LS objective function yields the optimization problem
	\begin{equation}
		\label{eq:twovariablelrpaaals}
		\min_{\beta^{(1)},\beta^{(2)}} \left\lVert \L_2 \sum_{k=1}^r \beta^{(1)}_{k} \otimes \beta^{(2)}_{k} \right\rVert_2^2 \quad \text{s.t.} \quad \left\lVert \sum_{k=1}^r \beta^{(1)}_{k} \otimes \beta^{(2)}_{k} \right\rVert_2 = 1.
	\end{equation}
	First, we define the matrices \textcolor{black}{
	\begin{equation}
		\label{eq:Ktwovariables}
		\begin{aligned}
			\bJ^{(1)} &= \left[ \bI_{n_1} \otimes \beta^{(2)}_1,\ldots, \bI_{n_1} \otimes \beta^{(2)}_r \right] \in \C^{n_1 n_2 \times n_1 r},~\mbox{and} \\
			\bJ^{(2)} &= \left[ \beta^{(1)}_1 \otimes \bI_{n_2}, \ldots, \beta^{(1)}_r \otimes \bI_{n_2} \right] \in \C^{n_1 n_2 \times n_2 r}.      
		\end{aligned}
	\end{equation}}
	Based on the relation
	\begin{equation*}
		\beta^{(1)}_{k} \otimes \beta^{(2)}_{k} = \left(\beta^{(1)}_{k} \otimes \bI_{n_2}\right) \beta^{(2)}_{k} = \left(\bI_{n_1} \otimes \beta^{(2)}_{k}\right) \beta_{k}^{(1)},
	\end{equation*}
	we obtain
	\begin{equation} \label{eq:alphaK1K2}
		\vectorize(\alpha) = \sum_{k=1}^r \beta^{(1)}_{k} \otimes \beta^{(2)}_{k} = \bJ^{(1)} \vectorize\left(\beta^{(1)}\right) = \bJ^{(2)} \vectorize\left(\beta^{(2)}\right).
	\end{equation}
	Next, we define the \emph{contracted} Loewner matrices
	\begin{equation} \label{eq:Loewcontracted}
		\L^{(1)}_2 := \L_2 \bJ^{(1)} \in \C^{N_1 N_2 \times n_1 r} \qquad \text{and} \qquad \L^{(2)}_2 := \L_2 \bJ^{(2)} \in \C^{N_1 N_2 \times n_2 r}.
	\end{equation}
Then, employing~\eqref{eq:alphaK1K2} and~\eqref{eq:Loewcontracted}  allows for equivalently rewriting the objective function as
	\begin{equation} \label{eq:objfuncnew}
		\left\lVert \L_2 \sum_{k=1}^r \beta^{(1)}_{k} \otimes \beta^{(2)}_{k} \right\rVert_2^2 = \left\lVert \L^{(1)}_2 \vectorize\left(\beta^{(1)}\right) \right\rVert_2^2 = \left\lVert \L^{(2)}_2 \vectorize\left(\beta^{(2)}\right) \right\rVert_2^2.
	\end{equation}
	Overall this reformulation~\eqref{eq:objfuncnew} yields two equivalent ways for writing the LS problem in \eqref{eq:twovariablelrpaaals}: 
	\begin{align}
	\mbox{(i)}~~~~	\min_{\beta^{(1)}, \beta^{(2)}} \left\Vert \L_2^{(1)} \vectorize\left(\beta^{(1)}\right) \right\rVert_2^2 \quad \text{s.t.} \quad \left\lVert \bJ^{(1)} \vectorize\left(\beta^{(1)}\right) \right\rVert_2 = 1 , \label{eq:beta1ls} \\
	\mbox{(ii)}~~~~	\min_{\beta^{(1)}, \beta^{(2)}} \left\Vert \L_2^{(2)} \vectorize\left(\beta^{(2)}\right) \right\rVert_2^2 \quad \text{s.t.} \quad \left\lVert \bJ^{(2)} \vectorize\left(\beta^{(2)}\right) \right\rVert_2 = 1. \label{eq:beta2ls}
	\end{align}
    Recall the definitions of $\bJ^{(1)}$
    and $\L_2^{(1)}$ from~\eqref{eq:Ktwovariables} and~\eqref{eq:Loewcontracted}.  
	An important observation is now that
   if  $\beta^{(2)}$ is fixed in~\eqref{eq:beta1ls}, then $\beta^{(1)}$
    can be computed by solving a constrained linear LS problem which admits a closed form solution in terms of a generalized SVD \cite{van_loan_generalizing_1976}. Similarly, if  $\beta^{(1)}$ is fixed in~\eqref{eq:beta2ls}, then $\beta^{(2)}$
    can be computed. Suppose, for example, that $\beta^{(2)}$ is fixed and we would like to solve the optimization problem in \eqref{eq:beta1ls} for the free variable $\beta^{(1)}$. Then we can compute the (economy size) generalized SVD of the matrix tuple $\big(\L_2^{(1)},\bJ^{(1)}\big)$ given by
	\begin{align*}
		\bU^\top \L_2^{(1)} \bX &= \bC, \\
		\bV^\top \bJ^{(1)} \bX &= \bD,
	\end{align*}
	where $\bU\in\C^{N_1 N_2 \times n_1 r}$ and $\bV\in\C^{n_1 n_2 \times n_1 r}$ are sub-unitary, $\bC,\bD \in \C^{n_1 r \times n_1 r}$ are diagonal matrices with non-negative entries, and $\bX \in \C^{n_1 r \times n_1 r}$ is an invertible matrix. Next, assuming that $\bJ^{(1)}$ is not rank-deficient, we get that the generalized singular values are given by $\sigma_i = \bC_{ii} / \bD_{ii}$. Then the solution $\beta^{(1)}$ of the constrained LS problem~\eqref{eq:beta1ls} is finally given by
	\begin{equation}
		\label{eq:gsvdsolution}
		\vectorize\left(\beta^{(1)}\right) = \frac{1}{\bD_{kk}} \bX e_k,
	\end{equation}
	where $e_k \in \R^{n_1 n_2}$ is the $k$-th unit vector and $k$ is the index of the smallest generalized singular value $\sigma_k$. Similarly, we can compute the solution of the LS problem in \eqref{eq:beta2ls} if $\beta^{(1)}$ is fixed by computing a generalized SVD of the tuple $\big(\L_2^{(2)},\bJ^{(2)}\big)$. It is important to note that the generalized SVDs of the tuples $\big(\L_2^{(1)},\bJ^{(1)}\big)$ and $\big(\L_2^{(2)},\bJ^{(2)}\big)$ can be computed in $\cO(N_1 N_2 n_1^2 r^2)$ and $\cO(N_1 N_2 n_2^2 r^2)$, respectively. Hence, if $r \ll n_1,n_2$ then computing the generalized SVDs is significantly faster than computing the SVD of $\L_2$. This fact motivates using an alternating LS (ALS) solver, based on the two formulations depicted in \eqref{eq:beta1ls} and \eqref{eq:beta2ls} to tackle the optimization problem in \eqref{eq:twovariablelrpaaals}. 

 \begin{algorithm}[t]
		\caption{ALS for low-rank p-AAA}\label{alg:als}
		\begin{algorithmic}[1]
			\Require{Data $\bZ^{(j)},\blambda^{(j)},\D,\H$, initial guess for $\beta^{(j)} \in \C^{n_j \times r}$ for $j=1,\ldots,d$}
			\Ensure{$\beta^{(1)},\ldots,\beta^{(d)}$}
			\Repeat
            \For{$j=1,\ldots,d$}
			\State $\beta^{(j)} \gets \argmin_{\beta^{(j)}} \left\lVert \L_d^{(j)} \vectorize\left(\beta^{(j)}\right) \right\rVert_2^2$ \text{ s.t. } $\left\lVert \bJ^{(j)} \vectorize\left(\beta^{(j)}\right) \right\rVert_2 = 1$
			\State Store column norms $\mu \gets \left[ \lVert \beta^{(j)}_1 \rVert_2,\ldots, \lVert \beta^{(j)}_r \rVert_2 \right]$
			\State Normalize columns $\beta^{(j)} \gets \beta^{(j)} \diag(\mu)^{-1}$
			\EndFor
			\Until{$\lVert \L_2 \vectorize(\alpha) \rVert_2^2$ ceases to change}
			\State $\beta^{(1)} \gets \beta^{(1)} \diag(\mu)$
		\end{algorithmic}
	\end{algorithm}

    In order to extend this derivation to $d>2$ variables we consider
	\begin{equation*}
		\vectorize(\alpha) = \sum_{k=1}^r \beta^{(1)}_{k} \otimes \cdots \otimes \beta^{(d)}_{k} \in \C^{n_1 \cdots n_d}
	\end{equation*}
	and the corresponding constrained optimization problem
	\begin{equation}
		\label{eq:lrpaaals}
		\min_{\beta^{(1)},\ldots,\beta^{(d)}} \left\lVert \L_d \sum_{k=1}^r \beta^{(1)}_{k} \otimes \cdots \otimes \beta^{(d)}_{k} \right\rVert_2^2 \quad \text{s.t.} \quad \left\lVert \sum_{k=1}^r \beta^{(1)}_{k} \otimes \cdots \otimes \beta^{(d)}_{k} \right\rVert_2 = 1.
	\end{equation}
	The generalized form of the matrices $\bJ^{(1)}, \bJ^{(2)}$ in \eqref{eq:Ktwovariables} now reads
	\begin{equation}
		\label{eq:Kmanyvariables}
		\begin{aligned}
			\bJ^{(j)} = [\beta^{(1)}_1 \otimes \cdots \otimes \beta^{(j-1)}_1 &\otimes \bI_{n_j} \otimes \beta^{(j+1)}_1 \cdots \otimes \beta^{(d)}_1, \ldots, \\ & \beta^{(1)}_r \otimes \cdots \otimes \beta^{(j-1)}_r \otimes \bI_{n_j} \otimes \beta^{(j+1)}_r \cdots \otimes \beta^{(d)}_r] \in \C^{n_1 \cdots n_d \times n_j r},
		\end{aligned}
	\end{equation}
	for $j=1,\ldots,d$. Similar to the LS problems in \eqref{eq:beta1ls} and \eqref{eq:beta2ls} we now obtain
	\begin{equation} \label{eq:Ldjmin}
		\min_{\beta^{(1)},\ldots,\beta^{(d)}} \left\Vert \L_d^{(j)} \vectorize\left(\beta^{(j)}\right) \right\rVert_2^2 \quad \text{s.t.} \quad \left\lVert \bJ^{(j)} \vectorize\left(\beta^{(j)}\right) \right\rVert_2 = 1,
	\end{equation}
	where $\L_d^{(j)} = \L_d \bJ^{(j)}$ for $j=1,\ldots,d$. The situation is similar to the $d=2$ case. For example, 
    when  $\beta^{(2)},\beta^{(3)},\ldots,\beta^{(d)}$ are fixed in~\eqref{eq:Ldjmin}, then $\beta^{(1)}$
    can be directly computed by solving the resulting constrained LS problem.     We summarize the corresponding ALS approach in Algorithm~\ref{alg:als}. Note that we rescale the columns of the CP factors in order to avoid numerical issues such as (nearly) singular factor matrices and ill-conditioned LS matrices along the lines of ALS algorithms which are used for computing a CP decomposition based on a given full tensor \cite{kolda_tensor_2009}.
    
	Now that we have introduced the ALS algorithm in the setting of p-AAA, we can also formulate our proposed low-rank p-AAA algorithm, which is depicted in Algorithm~\ref{alg:lrpaaa}. The two key differences between the original p-AAA as  in Algorithm~\ref{alg:paaa} and the proposed low-rank version as in Algorithm~\ref{alg:lrpaaa} are that (i) in low-rank p-AAA the LS problem is solved via the ALS procedure outlined in Algorithm~\ref{alg:als} and (ii) the barycentric coefficients are stored implicitly in terms of the CP factors. 
	\begin{algorithm}[ht]
		\caption{Low-rank p-AAA}\label{alg:lrpaaa}
		\begin{algorithmic}[1]
			\Require{Sets of sampling points $\bZ^{(1)},\ldots,\bZ^{(d)}$, data tensor $\D$, size of CP decomposition $r$}
			\Ensure{$\br \approx \bff$}
			\State{Initialize $\br \equiv \operatorname{average}(\D)$}
            \State{$\blambda^{(1)},\ldots,\blambda^{(d)} \gets \emptyset$}
			\While{error $>$ desired tolerance}
			\State{Select $ \left( \lambda^{(1)}_*,\ldots,\lambda^{(d)}_* \right) $ via a greedy selection}
            \For{$j=1,\ldots,d$}
			\State Update the interpolation set $\blambda^{(j)} \gets \blambda^{(j)} \cup \{ \lambda^{(j)}_* \}$
			\EndFor
			\State{Use Algorithm~\ref{alg:als} to solve\textcolor{black}{
            $$
                \min_{\beta^{(1)},\ldots,\beta^{(d)}} \left\lVert \L_d \sum_{k=1}^r \beta^{(1)}_{k} \otimes \cdots \otimes \beta^{(d)}_{k} \right\rVert_2^2 \quad \text{s.t.} \quad \left\lVert \sum_{k=1}^r \beta^{(1)}_{k} \otimes \cdots \otimes \beta^{(d)}_{k} \right\rVert_2 = 1
            $$}}
			\State{Use $\beta^{(1)},\ldots,\beta^{(d)}$ to update the rational approximant $\br$}
			\State{error $ \gets \max_{i_1,\ldots,i_d} \; \left\lvert \D_{i_1 \ldots i_d} - \br\left(Z^{(1)}_{i_1},\ldots,Z^{(d)}_{i_d}\right) \right\vert / \left\lvert \D_{i_1 \ldots i_d} \right\rvert $}
			\EndWhile
		\end{algorithmic}
	\end{algorithm}
    
	There are two important computational advantages of using low-rank p-AAA over the standard p-AAA algorithm. First, the number of floating point operations required for the low-rank p-AAA algorithm in Algorithm~\ref{alg:lrpaaa} is on the order of $\cO(N_1 \cdots N_d r^2(n_1^2 + \cdots + n_d^2))$ while the standard p-AAA LS problem admits a computational complexity of $\cO(N_1 \cdots N_d n_1^2 \cdots n_d^2)$. Thus, if $r \ll \min(n_1,n_2,\ldots,n_d)$ ALS allows for computing the factors $\beta^{(j)}$ for $j=1,\ldots,d$ significantly more efficiently than the standard LS approach. Additionally, we can avoid forming $\L_d$ explicitly in the ALS procedure as we explain in Section~\ref{sec:contractedLconstruction}. This drastically reduces the memory required for executing Algorithm~\ref{alg:lrpaaa} rather than the standard p-AAA algorithm.
    The matrices $\L_d^{(j)}$ can be constructed carefully such that the memory requirement of the contracted Loewner matrices never exceeds $\cO(N_1 \cdots N_d n_j r)$ for $j=1,\ldots,d$. Thus, for large data sets memory limitations are less of an issue in the ALS procedure if again $r \ll \min(n_1, n_2,\ldots,n_d)$. Additionally, it is worth pointing out that the ALS objective function is guaranteed to be non-increasing. Hence, in each iteration it is guaranteed that we obtain an improved solution or that the algorithm has converged.

	\section{Practical Considerations}
	\label{sec:practical}
	The goal of this section is to discuss how the choice of separation rank $r$, initial guesses for $\beta^{(j)}$, and convergence criteria can be chosen in Algorithms~\ref{alg:als} and \ref{alg:lrpaaa}, respectively. For the simplicity, we will perform some of our discussion below for the two-variable case but  as before, our conclusions easily extend to $d>2$ variables by employing the  formulae from Section~\ref{sec:lrpaaa} corresponding to the $d>2$ case.
	
	\subsection{CP Decomposition Size for Barycentric Coefficients}
	\label{sec:cpsize}
	In general, it is not clear how to make a good choice for the input $r$ in Algorithm~\ref{alg:lrpaaa}. Choosing $r=1$ obviously yields the most significant computational speed up, but for many problems a rational approximant with higher order will be required for the algorithm to converge. On the other hand, choosing $r$ too large may negate any computational advantage that low-rank p-AAA has over the original algorithm. A good balance between approximation quality and speed up for our numerical experiments was achieved for values of $r$ between $2$ and $10$. An ideal choice will however be problem dependent and more rigorous strategies for choosing $r$ remain to be investigated. Additionally, we note that if $r$ is chosen too large, the CP factors $\beta^{(1)},\ldots,\beta^{(d)}$ may have linearly dependent columns. If $r>1$ this is even guaranteed to happen in the first iteration of p-AAA where $\alpha \in \C^{1 \times \cdots \times 1}$ is a scalar. If $\beta^{(1)},\ldots,\beta^{(d)}$ have linearly dependent columns then the matrices in $\eqref{eq:Ktwovariables}$ will have linearly dependent columns as well. In this case the solution in \eqref{eq:gsvdsolution} is not well-defined. In order to deal with this issue, we propose computing $\rk(\bJ^{(1)}) = n_1 \rk(\beta^{(1)})$ prior to the generalized SVD and reducing $r$ to $\rk(\beta^{(1)})$ if the matrix is rank-deficient. Doing this will ensure that the solution in \eqref{eq:gsvdsolution} is well-defined. We suggest to perform similar rank-truncation steps for the matrix $\bJ^{(2)}$ or the matrices $\bJ^{(j)}$ in \eqref{eq:Kmanyvariables} for $j=1,\ldots,d$ in the many variable case.
	
	\subsection{ALS Initialization}
	\label{sec:alsinit}
	Another choice one has to make in the low-rank p-AAA algorithm are the initial guesses for $\beta^{(j)}$ in the ALS procedure outlined in Algorithm~\ref{alg:als}. While some advanced initialization strategies for ALS exist in the context of tensor approximations \cite{kolda_tensor_2009}, random matrices are a commonly employed choice. We propose here an effective initialization scheme for the low-rank p-AAA algorithm, which significantly outperforms random initial guesses for most of our numerical experiments. The main idea is to reuse the barycentric coefficients from previous iterations. Let $\beta^{(j)}(\ell) \in \C^{n_j \times r}$ be the $j$-th CP factor of $\alpha$ in step $\ell$ of Algorithm~\ref{alg:lrpaaa}. Accordingly, $\beta^{(j)}(\ell+1) \in \C^{\tn_j \times r}$ is the $j$-th CP factor of $\alpha$ which we would like to compute in step $\ell+1$ of the low-rank p-AAA algorithm via ALS. We have $\tn_j = n_j + 1$ if a new interpolation node was added to $\blambda^{(j)}$ after the $\ell$-th iteration and $\tn_j = n_j$ otherwise. In the latter case, we propose to initialize ALS in step $\ell+1$ with the previously computed iterate $\beta^{(j)}(\ell)$. In other words we choose $\beta^{(j)}(\ell)$ as the ALS initial guess for computing $\beta^{(j)}(\ell+1)$. If a new interpolation node was added to $\blambda^{(j)}$, we instead add a row of zeros to the initial guess of the ALS procedure. This means that we use $\begin{bmatrix} \beta^{(j)}(\ell)^\top & 0 \end{bmatrix}^\top \in \C^{(n_j+1) \times r}$ as the ALS initial guess. We use this process to initialize CP factors for $j=1,\ldots,d$. This initialization procedure guarantees that the the objective function $\lVert \L_d \vectorize(\alpha) \rVert_2^2$ at the beginning of ALS at step $\ell+1$ of low-rank p-AAA will be less than or equal to the ALS objective function at the end of the $\ell$-th p-AAA iteration. This fact is not tied to the currently considered low-rank structure and a general statement which summarizes this observation is given in the following lemma.
	\begin{lemma}
		\label{proposition:lsinit}
		Consider the sampling points $\bZ^{(j)}$ in \eqref{eq:dsamplingpoints} and the interpolation nodes $\blambda^{(j)}$ in \eqref{eq:dinterpolationnodes} for $j=1,\ldots,d$. Let $\L_d$ be the corresponding higher-order Loewner matrix in \eqref{eq:Ld} and $\alpha \in \C^{n_1 \times \cdots \times n_d}$ an arbitrary tensor. Let $\tilde{\blambda}^{(j)} = \left\{ \tilde{\lambda}^{(j)}_1, \ldots, \tilde{\lambda}^{(j)}_{\tn_j}\right\} \subset \bZ^{(j)}$ be any set of interpolation nodes such that $\blambda^{(j)} \subseteq \tilde{\blambda}^{(j)}$ and $\tn_j \geq n_j$. Additionally, let $\tilde{\L}_d$ be the higher-order Loewner matrix based on this set of interpolation nodes and let the tensor $\tilde{\alpha} \in \C^{\tn_1 \times \cdots \times \tn_d}$ be defined as
		\begin{equation*}
			\tilde{\alpha}_{i_1\ldots i_d} = \begin{cases}
				\alpha_{i_1\ldots i_d}  \quad &\text{if} \quad i_j \leq n_j \; \text{for} \; j=1,\ldots,d \\
				0 \quad &\text{else}
			\end{cases}
		\end{equation*}
		In other words, let $\tilde{\alpha}$ be zero everywhere except for the leading $n_1 \times \cdots \times n_d$ tensor which equals $\alpha$. Then
		\begin{equation*}
			\lVert \tilde{\L}_d \vectorize(\tilde{\alpha}) \rVert_2 \leq \lVert \L_d \vectorize(\alpha) \rVert_2.
		\end{equation*}
	\end{lemma}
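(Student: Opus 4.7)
The plan is to rewrite both norms appearing in the claim as sums of squared residuals at the common sampling grid $\bZ^{(1)} \times \cdots \times \bZ^{(d)}$ and then show that, point-by-point, the residual produced by $(\tilde{\L}_d, \tilde{\alpha})$ either coincides with that produced by $(\L_d, \alpha)$ or collapses to zero. Without loss of generality I assume the first $n_j$ entries of $\tilde{\blambda}^{(j)}$ coincide with $\blambda^{(j)}$, since permuting interpolation nodes simply permutes the corresponding indices of $\tilde{\alpha}$ and the rows of $\tilde{\cC}^{(j)}$ in a consistent way and leaves both norms invariant. Under this convention, the restriction of $\tilde{\H}$ to the leading $n_1 \times \cdots \times n_d$ block coincides with $\H$.

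Applying the derivation in~\eqref{eq:lserrorsum}--\eqref{eq:L2} in the $d$-variable case gives the representation
$$\lVert \L_d \vectorize(\alpha) \rVert_2^2 = \sum_{\bz \in \bZ^{(1)} \times \cdots \times \bZ^{(d)}} \left| \bd(\bz) \bff(\bz) - \bn(\bz) \right|^2,$$
with $\bn, \bd$ as in the natural $d$-variable version of~\eqref{eq:nd}--\eqref{eq:dd}, and an analogous identity $\lVert \tilde{\L}_d \vectorize(\tilde{\alpha}) \rVert_2^2 = \sum_{\bz} |\widetilde{\bd}(\bz) \bff(\bz) - \widetilde{\bn}(\bz)|^2$ where $\widetilde{\bn}, \widetilde{\bd}$ are built from $\tilde{\blambda}^{(j)}, \tilde{\alpha}, \tilde{\H}$. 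The crux is then a direct case split on the entries of $\tilde{\cC}^{(j)}$. For a sampling point $Z^{(j)}_k$ and an old index $i_j \le n_j$, inspection of~\eqref{eq:modifiedCauchy} together with $\blambda^{(j)} \subseteq \tilde{\blambda}^{(j)}$ yields $\tilde{\cC}^{(j)}(Z^{(j)}_k)_{i_j} = \cC^{(j)}(Z^{(j)}_k)_{i_j}$ whenever $Z^{(j)}_k \notin \tilde{\blambda}^{(j)} \setminus \blambda^{(j)}$, and $\tilde{\cC}^{(j)}(Z^{(j)}_k)_{i_j} = 0$ whenever $Z^{(j)}_k \in \tilde{\blambda}^{(j)} \setminus \blambda^{(j)}$ (here the ``else'' branch of~\eqref{eq:modifiedCauchy} triggers, because the old node $\tilde{\lambda}^{(j)}_{i_j} = \lambda^{(j)}_{i_j} \neq Z^{(j)}_k$).

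Since $\tilde{\alpha}$ is supported only on the leading $n_1 \times \cdots \times n_d$ index block and $\tilde{\H}$ matches $\H$ on this block, multiplying these factor-wise facts through the Kronecker products in $\widetilde{\bn}$ and $\widetilde{\bd}$ produces the pointwise identity
$$\widetilde{\bd}(\bz) \bff(\bz) - \widetilde{\bn}(\bz) = \begin{cases} \bd(\bz) \bff(\bz) - \bn(\bz) & \text{if } z^{(j)} \notin \tilde{\blambda}^{(j)} \setminus \blambda^{(j)} \text{ for all } j, \\ 0 & \text{otherwise.} \end{cases}$$
Summing squared moduli over $\bZ^{(1)} \times \cdots \times \bZ^{(d)}$ then delivers the claimed inequality, since the left-hand sum differs from the right-hand one only by the omission of nonnegative terms at sampling points with at least one coordinate in some $\tilde{\blambda}^{(j)} \setminus \blambda^{(j)}$. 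The main obstacle is the bookkeeping in the case split on $\tilde{\cC}^{(j)}$, and in particular the observation that the points at which the new residual vanishes are exactly the newly added interpolation nodes; once this is pinned down, the Kronecker expansion and the final summation are routine.
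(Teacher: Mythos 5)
Your proof is correct and follows essentially the same route as the paper's: expand both squared norms as sums of squared linearized residuals over the sampling grid $\bZ^{(1)}\times\cdots\times\bZ^{(d)}$ and compare them pointwise, observing that the residual built from $(\tilde{\L}_d,\tilde{\alpha})$ either coincides with the one built from $(\L_d,\alpha)$ or vanishes. Your case split (the new residual vanishes exactly when some coordinate lies in $\tilde{\blambda}^{(j)}\setminus\blambda^{(j)}$, and equals the old residual otherwise) is in fact stated slightly more precisely than the paper's, which asserts equality whenever the sampling point lies outside $\tilde{\blambda}^{(1)}\times\cdots\times\tilde{\blambda}^{(d)}$; both versions yield the pointwise inequality $\tilde{\bE}^2\leq\bE^2$ and hence the claim.
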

	\begin{proof}
		First, consider the expressions
		\begin{equation*}
			\bE_{j_1\ldots j_d}(\alpha) = \sum_{i_1=1}^{n_1} \ldots \sum_{i_d=1}^{n_d} \alpha_{i_1\ldots i_d}  \left[\bff\left(Z^{(1)}_{j_1}, \ldots, Z^{(d)}_{j_d}\right) - \bff\left(Z^{(1)}_{i_1}, \ldots, Z^{(d)}_{i_d}\right) \right] \prod_{k=1}^d \cC\left(\blambda^{(k)},Z^{(k)}_{j_k}\right)_{i_k}
		\end{equation*}
		and
		\begin{equation*}
			\tilde{\bE}_{j_1\ldots j_d}(\tilde{\alpha}) = \sum_{\ti_1=1}^{\tn_1} \ldots \sum_{\ti_d=1}^{\tn_d} \tilde{\alpha}_{\ti_1\ldots \ti_d}  \left[\bff\left(Z^{(1)}_{j_1}, \ldots, Z^{(d)}_{j_d}\right) - \bff\left(Z^{(1)}_{\ti_1}, \ldots, Z^{(d)}_{\ti_d}\right) \right] \prod_{k=1}^d \cC\left(\tilde{\blambda}^{(k)},Z^{(k)}_{j_k}\right)_{\ti_k}.
		\end{equation*}
		It is easily verified that
		\begin{equation*}
			\lVert \L_d \vectorize(\alpha) \rVert_2^2 = \sum_{j_1=1}^{N_1}\cdots \sum_{j_d=1}^{N_d} \bE_{j_1\ldots j_d}(\alpha)^2 \quad \text{and} \quad \lVert \tilde{\L}_d \vectorize(\tilde{\alpha}) \rVert_2^2 = \sum_{j_1=1}^{N_1}\cdots \sum_{j_d=1}^{N_d} \tilde{\bE}_{j_1\ldots j_d}(\tilde{\alpha})^2.
		\end{equation*}
		Further, we note that the definition for $\tilde{\alpha}$ yields
		\begin{equation*}
			\tilde{\bE}_{j_1\ldots j_d}(\tilde{\alpha}) = \begin{cases}
				\bE_{j_1\ldots j_d}(\alpha) \quad &\text{if} \quad \left(Z^{(1)}_{j_1},\ldots,Z^{(d)}_{j_d}\right) \notin \tilde{\blambda}^{(1)} \times \cdots \times \tilde{\blambda}^{(d)} \\
				0 \quad &\text{else}
			\end{cases}
		\end{equation*}
		Since $\blambda^{(1)} \times \cdots \times \blambda^{(d)} \subseteq \tilde{\blambda}^{(1)} \times \cdots \times \tilde{\blambda}^{(d)}$ we therefore have
		\begin{equation*}
			\tilde{\bE}_{j_1\ldots j_d}(\tilde{\alpha})^2 \leq \bE_{j_1\ldots j_d}(\alpha)^2 \quad \text{for} \quad j_k = 1,\ldots,N_k \quad \text{and} \quad k=1,\ldots,d.
		\end{equation*}
		With this result in hand, we finally obtain
		\begin{equation*}
			\lVert \tilde{\L}_d \vectorize(\tilde{\alpha}) \rVert_2^2 = \sum_{j_1=1}^{N_1}\cdots \sum_{j_d=1}^{N_d} \tilde{\bE}_{j_1\ldots j_d}(\tilde{\alpha})^2 \leq \sum_{j_1=1}^{N_1}\cdots \sum_{j_d=1}^{N_d} \bE_{j_1\ldots j_d}(\alpha)^2 = \lVert \L_d \vectorize(\alpha) \rVert_2^2.
		\end{equation*}
	\end{proof}
	In summary, Lemma~\ref{proposition:lsinit} states that after adding interpolation points and adding zero values to the previous barycentric coefficient tensor $\alpha$, we are guaranteed that the linear LS error does not increase. Note that our previously suggested ALS initialization for the CP factors yields $\tilde{\alpha}$ that satisfies the assumptions of the proposition via
	\begin{equation*}
		\tilde{\alpha} = \sum_{k=1}^r \tilde{\beta}^{(1)}_k \otimes \cdots \otimes \tilde{\beta}^{(d)}_k,
	\end{equation*}
    where
    \begin{equation*}
		\label{eq:alsinit}
        \tilde{\beta}^{(j)} = \begin{cases}
            \beta^{(j)}(\ell) \quad &\text{if} \quad \tilde{n}_j = n_j \\
            \begin{bmatrix}
			 \beta^{(j)}(\ell)^\top & 0
		  \end{bmatrix}^\top \quad &\text{else}.
        \end{cases}
    \end{equation*}
	The discussed ALS initialization strategy has the advantageous property that the low-rank p-AAA algorithm is guaranteed to monotonically decrease the linearized approximation error within an individual and in between two successive applications of Algorithm~\ref{alg:als}, assuming $r$ was not reduced in the previous iteration as outlined in Section~\ref{sec:cpsize}. This effect is further illustrated in Section~\ref{sec:trigexample3} where the convergence behavior of low-rank p-AAA is assessed via a numerical example. We note that if $r$ was reduced in a previous p-AAA iteration in order to avoid rank-deficiencies, we initialize columns which are newly added to the ALS initial guess $\tilde{\beta}^{(j)}$ randomly.

	\subsection{ALS Stopping Criterion}
	\label{sec:alsstopping}
	Another choice one has to make for the ALS procedure is when to stop the iteration. If we perform too many iterations in the ALS procedure, we may negate the computational advantage our proposed low-rank approach has over the standard p-AAA algorithm. As ALS is known to converge slowly in the context of tensor approximation algorithms \cite{kolda_tensor_2009}, this is a concern we should take into consideration. Our proposed stopping criterion for ALS is based on the relative change in the ALS objective function. In particular, we suggest choosing $0 < \varepsilon < 1$ and terminating ALS if 
	\begin{equation*}
		\left\lvert \lVert \L_2 \vectorize\left(\alpha(\ell)\right) \rVert_2^2 - \lVert \L_2 \vectorize\left(\alpha(\ell+1)\right) \rVert_2^2 \right\rvert \leq \varepsilon \lVert \L_2 \vectorize\left(\alpha(\ell+1)\right) \rVert_2^2
	\end{equation*}
	where $\alpha(\ell)$ and $\alpha(\ell+1)$ are the barycentric coefficient tensors in steps $\ell$ and $\ell+1$ of ALS, respectively. Additionally, the ALS procedure should always be terminated if $\lVert \L_2 \vectorize(\alpha(\ell+1)) \rVert_2^2 = 0$. As choices for $\varepsilon$ we suggest using rather large values such as $10^{-2}$. We motivate this by noting that our objective function $\lVert \L_2 \vectorize(\alpha) \rVert_2^2$ is only an approximation of the nonlinear LS error introduced in \eqref{eq:nonlinearLSerror}. 
	Hence, performing many ALS iterations may not always be worth the effort as the effect on the nonlinear error may be unclear. We demonstrate in Section~\ref{sec:trigexample3} that our proposed choice of $\varepsilon = 10^{-2}$ indeed strikes a good balance between computational efficiency and approximation quality. As usual, however, the ideal choice of $\varepsilon$ is problem dependent and we merely share a suggestion which is based on our moderately sized set of numerical experiments.
	
	\subsection{Efficient Construction of Contracted Loewner Matrices}
	\label{sec:contractedLconstruction}
	We note that in the ALS procedure outlined in Algorithm~\ref{alg:als} the contracted Loewner matrices $\L_d^{(j)}$ for $j=1,\ldots,d$ need to be constructed in each iteration. Therefore, one important prerequisite for ALS to be effective is that the contracted matrices can be assembled efficiently. In particular, we would like to construct these matrices without ever fully forming the Loewner matrix $\L_d$. In order to achieve this goal let us inspect the expression
	\begin{align*}
		\L_2^{(1)} = \L_2 \bJ^{(1)} &= \left( \diag\left(\vectorize(\D)\right) \left[\cC^{(1)} \otimes \cC^{(2)}\right]^\top - \left[\cC^{(1)} \otimes \cC^{(2)}\right]^\top \diag(\vectorize(\H)) \right) \bJ^{(1)} \\
		&= \underbrace{\diag\left(\vectorize(\D)\right) \left[\cC^{(1)} \otimes \cC^{(2)}\right]^\top \bJ^{(1)}}_{=\bL^{(1)}} - \underbrace{\left[\cC^{(1)} \otimes \cC^{(2)}\right]^\top \diag(\vectorize(\H)) \bJ^{(1)}}_{=\bR^{(1)}}
	\end{align*}
    for the case $d=2$. In order to construct $\bL^{(1)}$ we follow the steps below:
	\begin{enumerate}
		\item Form the matrix $\bJ^{(1)}$ and reshape it into a tensor $\mathcal{J}^{(1)} \in \C^{n_1 \times n_2 \times n_1 r}$.
		\item Compute $\mathcal{L}^{(1)} = \mathcal{J}^{(1)} \times_1 \cC^{(1)} \times_2 \cC^{(2)} \in \C^{N_1 \times N_2 \times n_1 r}$, where $\times_1$ and $\times_2$ are the mode-$1$ and mode-$2$ tensor-matrix products, respectively \cite{kolda_tensor_2009}.
		\item Reshape $\mathcal{L}^{(1)}$ into a matrix $\hat{\bL}^{(1)} \in \C^{N_1 N_2 \times n_1 r}$ and efficiently compute $\bL^{(1)} = \diag(\vectorize(\D)) \hat{\bL}^{(1)}$ by scaling the $N_1 N_2$ rows of $\hat{\bL}^{(1)}$ with the entries of $\vectorize(\D) \in \C^{N_1 N_2}$.
	\end{enumerate}
	Similarly, we can compute $\bR^{(1)}$ by switching some of the steps outlined above. We then obtain $\L_2^{(1)}$ by subtracting $\bL^{(1)}$ and $\bR^{(2)}$. The procedure for computing $\L_2^{(2)}$ is almost identical and only requires replacing $\bJ^{(1)}$ with $\bJ^{(2)}$. The main idea here is to utilize mode-$k$ tensor-matrix products in order to avoid forming the Kronecker products of Cauchy matrices which are in $\C^{N_1 N_2 \times n_1 n_2}$ explicitly. This keeps the computational cost and memory requirements for forming $\L_2^{(1)}$ and $\L_2^{(2)}$ low. This approach directly extends to the $d$ variable case by considering the matrices in \eqref{eq:Kmanyvariables} and the additional Kronecker products present in $\L_d$.
	
	\section{Numerical Examples}
    \label{sec:numericalexamples}
	In this section we explore the performance of low-rank p-AAA as depicted in Algorithm~\ref{alg:lrpaaa} on various benchmark problems. We make comparisons to the standard p-AAA algorithm and discuss how the choices of input parameters to the low-rank p-AAA algorithm affect the approximation quality. All computations are performed on a compute cluster using Ubuntu 22.04.5 LTS (x86\_64), two Intel(R) Xeon(R) Gold 6246R CPUs, 384 GB RAM and MATLAB R2023b version 23.2.0.2365128. The code which was used for all examples is available in \cite{balicki2025code}. \\
	
	\subsection{Synthetic Parametric Model}
    \label{sec:synthetic}    
	In our first example we consider the rational transfer function $\bH(s,p)$, originating from a synthetic parametric model \cite{the_morwiki_community_synthetic_2005},
    with order $100$ in both $s$ and $p$. Here we have $p \in \R$, hence we are considering the two-variable case where $\bz = (s,p) \in \C^2$ and $\bff(\bz) = \bH(s,p)$. We use this simple example to showcase the effect of various choices for $r$ and to make comparisons to the original p-AAA algorithm. For our numerical experiments, we consider $500$ logarithmically distributed sampling points in $[1,10^4] \dot{\imath}$ for the $s$-variable and $50$ logarithmically spaced points in $[10^{-1.5},1]$ in the parameter $p$. Here, we use the ALS initialization strategy discussed in Section~\ref{sec:alsinit} and use a stopping criterion based on a relative change of $10^{-2}$ for ALS as discussed in Section~\ref{sec:alsstopping}. First, we compare how different choices for the size $r$ of the CP decomposition of $\alpha$ effect the convergence behavior of low-rank p-AAA. For this, we perform $70$ iterations of Algorithm~\ref{alg:lrpaaa} with $r\in\{4,7,10\}$ and compare the resulting relative nonlinear LS error
    \begin{equation}
        \label{eq:nonlinrellserror}
        \sum_{i_1=1}^{N_1} \sum_{i_2=1}^{N_2} \left( \bff\left(Z^{(1)}_{i_1},Z^{(2)}_{i_2}\right) - \br\left(Z^{(1)}_{i_1},Z^{(2)}_{i_2}\right) \right)^2 \Bigg/ \sum_{i_1=1}^{N_1} \sum_{i_2=1}^{N_2} \bff\left(Z^{(1)}_{i_1},Z^{(2)}_{i_2}\right)^2
    \end{equation}
    and relative maximum error
    \begin{equation*}
        \max_{i_1,i_2} \left\lvert \bff\left(Z^{(1)}_{i_1},Z^{(2)}_{i_2}\right) - \br\left(Z^{(1)}_{i_1},Z^{(2)}_{i_2}\right) \right\rvert  \bigg/ \max_{i_1,i_2} \left\lvert \bff\left(Z^{(1)}_{i_1},Z^{(2)}_{i_2}\right) \right\rvert
    \end{equation*}    
    with the errors obtained from $70$ iterations of the original p-AAA algorithm. {These errors are depicted in Figure~\ref{fig:syntheticsampleerror}. Aside from considering the error over the sample data, we also monitor the error over a validation set throughout each iteration. The validation set covers the same intervals as the sample data but is based on $1000$ sampling points in the $s$ and $100$ sampling points in the $p$ variable. The corresponding validation errors for this example are illustrated in Figure~\ref{fig:syntheticvalidationerror}. As one would expect, there is a clear trend: Higher values for $r$ yield better approximations and closer results to the original p-AAA. Additionally, we can see in Figure~\ref{fig:syntheticsampleerror} that the errors on the sample data set are only noticeably lower for the standard p-AAA approximant compared to the low-rank approximants in later iterations. However, as illustrated in Figure~\ref{fig:syntheticvalidationerror}, we observe far smaller gaps between the errors over the validation set. In particular, the validation errors for the low-rank approximants are only marginally larger then for the p-AAA approximant in this experiment. This illustrates the potential for saving memory and computational resources when using low-rank approximations for the barycentric coefficient tensor.} Another important observation is that the difference between error magnitudes on the sample data and the validation data are small for low-rank p-AAA and large for the original p-AAA algorithm. This shows that imposing a low-rank structure on the barycentric coefficients has a regularizing effect on the approximation procedure. Regularization techniques in data-driven modeling are often practically useful, highlighting an additional interesting use case of our proposed algorithm that goes beyond the possible computational savings. We also note that in the two variable case p-AAA and low-rank p-AAA are equivalent if $r \leq \min(n_1,n_2)$. In particular, ALS is guaranteed to converge to the true LS minimum in two steps. Therefore, the errors for low-rank p-AAA deviate from the p-AAA error only in later iterations. Additionally, we note that the final orders of the rational approximants are $(66,15)$ for the standard p-AAA algorithm and $(68,24)$, $(67,24)$ and $(66,24)$ for low-rank p-AAA with $r \in \{4,7,10\}$ (orders are sorted from lowest to highest value for $r$). The orders differ because in some iterations only one of the interpolation sets $\blambda^{(1)}$ or $\blambda^{(2)}$ increased instead of both and greedy selections for the interpolation points varied across the different setups.
    \begin{figure}[t]
    \centering
    \begin{subfigure}[b]{.5\textwidth}
    \begin{tikzpicture}
      \begin{axis}[%
        width  = 0.75\linewidth,
        height = .15\textheight,
        scale only axis,
        xmin = 0,
        xmax = 71,
        ymin = 1e-18,
        ymax = 1e3,
        xlabel = {Iteration Number},
        ylabel = {Relative LS Error},
        ymode = log,
        cycle list name = synthetic,
        legend pos = south west,
        legend cell align={left},
      ]%
      \pgfplotstableread{figure_data/synthetic_parametric_ls_training_errors.dat}\tableINPUT
      \addplot table[x index = 0, y index = 1] from \tableINPUT;
      \addplot table[x index = 0, y index = 2] from \tableINPUT;
      \addplot table[x index = 0, y index = 3] from \tableINPUT;
      \addplot table[x index = 0, y index = 4] from \tableINPUT;
      \legend{p-AAA, $r=4$, $r=7$, $r=10$}
      \end{axis}
   \end{tikzpicture}
   \end{subfigure}%
   \begin{subfigure}[b]{.5\textwidth}
    \begin{tikzpicture}
      \begin{axis}[%
        width  = 0.75\linewidth,
        height = .15\textheight,
        scale only axis,
        xmin = 0,
        xmax = 71,
        ymin = 1e-9,
        ymax = 1e2,
        xlabel = {Iteration Number},
        ylabel = {Relative Max Error},
        ymode = log,
        cycle list name = synthetic,
        legend pos = south west,
        legend cell align={left},
      ]%
      \pgfplotstableread{figure_data/synthetic_parametric_max_training_errors.dat}\tableINPUT
      \addplot table[x index = 0, y index = 1] from \tableINPUT;
      \addplot table[x index = 0, y index = 2] from \tableINPUT;
      \addplot table[x index = 0, y index = 3] from \tableINPUT;
      \addplot table[x index = 0, y index = 4] from \tableINPUT;
      \legend{p-AAA, $r=4$, $r=7$, $r=10$}
      \end{axis}
   \end{tikzpicture}
   \end{subfigure}%
    \caption{Sample data errors for example~\ref{sec:synthetic}. The relative (nonlinear) LS and relative maximum errors are depicted for the standard p-AAA algorithm and low-rank p-AAA with various choices for $r$.}
    \label{fig:syntheticsampleerror}
    \end{figure}
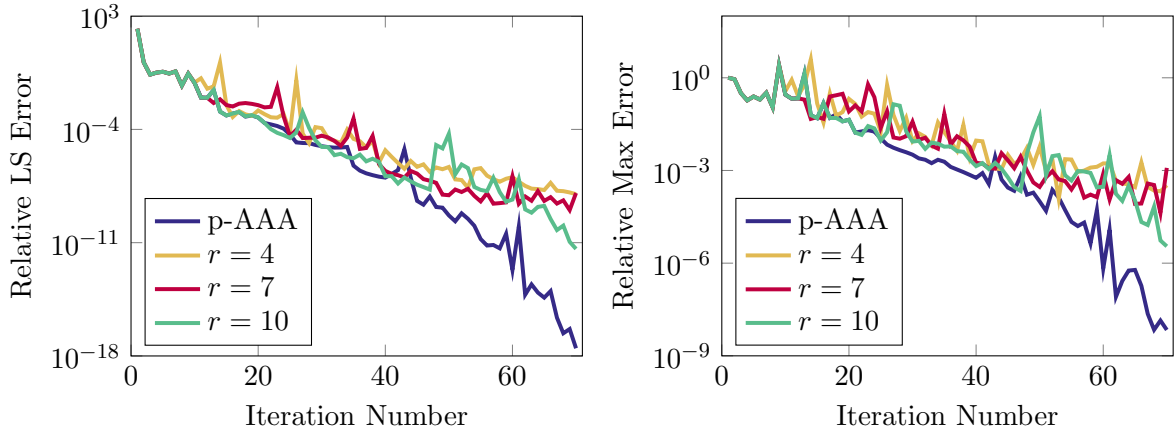

    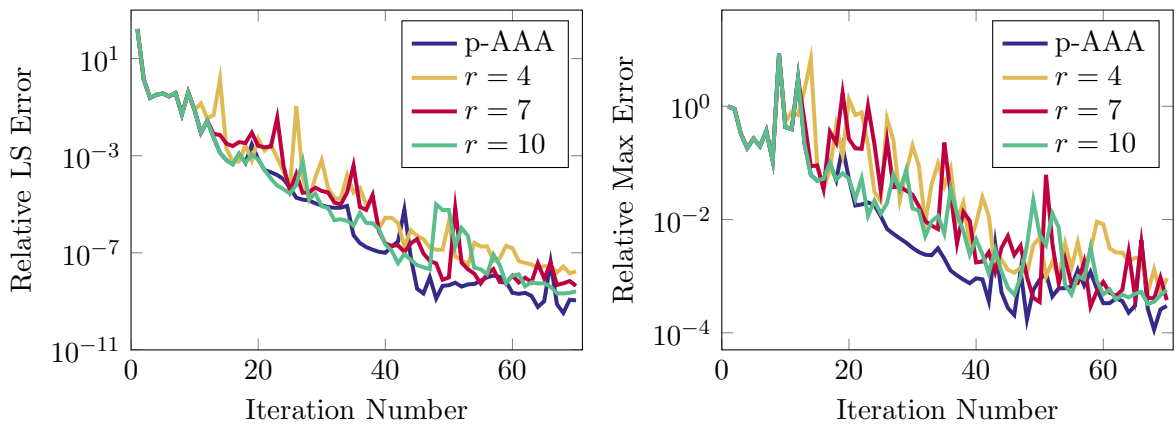
\begin{figure}[t]
    \centering
    \begin{subfigure}[b]{.5\textwidth}
    \begin{tikzpicture}
      \begin{axis}[%
        width  = 0.75\linewidth,
        height = .15\textheight,
        scale only axis,
        xmin = 0,
        xmax = 71,
        ymin = 1e-11,
        ymax = 1e3,
        xlabel = {Iteration Number},
        ylabel = {Relative LS Error},
        ymode = log,
        cycle list name = synthetic,
        legend pos = north east,
        legend cell align={left},
      ]%
      \pgfplotstableread{figure_data/synthetic_parametric_ls_validation_errors.dat}\tableINPUT
      \addplot table[x index = 0, y index = 1] from \tableINPUT;
      \addplot table[x index = 0, y index = 2] from \tableINPUT;
      \addplot table[x index = 0, y index = 3] from \tableINPUT;
      \addplot table[x index = 0, y index = 4] from \tableINPUT;
      \legend{p-AAA, $r=4$, $r=7$, $r=10$}
      \end{axis}
   \end{tikzpicture}
   \end{subfigure}%
   \begin{subfigure}[b]{.5\textwidth}
    \begin{tikzpicture}
      \begin{axis}[%
        width  = 0.75\linewidth,
        height = .15\textheight,
        scale only axis,
        xmin = 0,
        xmax = 71,
        ymin = 5e-5,
        ymax = 5e1,
        xlabel = {Iteration Number},
        ylabel = {Relative Max Error},
        ymode = log,
        cycle list name = synthetic,
        legend pos = north east,
        legend cell align={left},
      ]%
      \pgfplotstableread{figure_data/synthetic_parametric_max_validation_errors.dat}\tableINPUT
      \addplot table[x index = 0, y index = 1] from \tableINPUT;
      \addplot table[x index = 0, y index = 2] from \tableINPUT;
      \addplot table[x index = 0, y index = 3] from \tableINPUT;
      \addplot table[x index = 0, y index = 4] from \tableINPUT;
      \legend{p-AAA, $r=4$, $r=7$, $r=10$}
      \end{axis}
   \end{tikzpicture}
   \end{subfigure}%
    \caption{Validation data errors for example~\ref{sec:synthetic}. The relative (nonlinear) LS and relative maximum errors are depicted for the standard p-AAA algorithm and low-rank p-AAA with various choices for $r$.}
    \label{fig:syntheticvalidationerror}
    \end{figure}

	\subsection{Approximation of a Stationary Thermal Model}
	\label{sec:thermalblock}	
	Next, we consider approximation of a parametrized stationary thermal model. The model has four parameters $\bz = \bp=\big(p^{(1)},p^{(2)},p^{(3)},p^{(4)}\big) \in \R^4$ that define the heat conductivity in distinct subdomains. While various forms of this so-called ``cookie baking" benchmark problem exist, we focus here on the finite element discretization introduced in \cite{rave_non-stationary_2021}. In particular, we consider a stationary version of the problem given by
	\begin{equation}
		\begin{aligned}
			\nabla \cdot (\sigma(x,\bp) \nabla u(x, \bp) ) = 0 \quad & \text{for} \quad  x \in \Omega, \\
			\sigma(x,\bp) \nabla u(x, \bp) \cdot n(x) = 1 \quad & \text{for} \quad x \in \Gamma_L, \\
			\sigma(x,\bp) \nabla u(x, \bp) \cdot n(x) = 0 \quad & \text{for} \quad x \in \Gamma_T \cup \Gamma_B, \\
			u(x, \bp) = 0 \quad & \text{for} \quad x \in \Gamma_R, \\
		\end{aligned}
	\end{equation}
	where $\Gamma_L,\Gamma_T,\Gamma_R,\Gamma_B$ are the left, top, right and bottom boundaries of $\Omega = (0,1)^2$, respectively, and $n(x)$ is the normal vector on the corresponding boundary. In this model the heat conductivity $\sigma(x,\bp)$ is given by
	\begin{equation*}
		\sigma(x,\bp) = \begin{cases}
			p^{(j)} \quad &\text{if} \quad x \in \Omega_j, \\
			1 \quad &\text{otherwise},
		\end{cases}
	\end{equation*}
	where $\Omega_1,\ldots,\Omega_4$ are disjoint circular subdomains of $\Omega$. Additionally, we introduce the model output
	\begin{equation*}
		\by(\bp) = \int_{\Omega} u(x, \bp) \, dx
	\end{equation*}
	as the average temperature over the entire domain. Discretizing this model via the finite element method yields the representation
	\begin{align*}
		\bA(\bp) \bx(\bp) &= \bb \\
		\bff(\bp) &= \bc^\top \bx(\bp)
	\end{align*}
	introduced in \eqref{eq:stationaryparametric}. In the following we consider the same sets of sampling points $\bZ^{(1)}=\cdots=\bZ^{(4)}$ for the four parameters, where each $\bZ^{(j)}$ contains $40$ logarithmically spaced values between $10^{-6}$ and $10^2$. We execute Algorithm~\ref{alg:lrpaaa} with $r=5$ and use a relative change of the ALS objective function of less than $10^{-2}$ as a stopping criterion for ALS. After $8$ iterations we obtain a rational approximant of order $(7,7,6,6)$ with a relative maximum error of approximately $7.32 \times 10^{-4}$ on the set of samples. The approximation error for a set of validation points outside of the sampled data set is illustrated in Figure~\ref{fig:thermalblock}. Note that for this problem we can write $\bA(\bp)$ in a block tridiagonal form which closely resembles the structure for $\bK(\bz)$ introduced in \eqref{eq:blockK}. This means that an exact representation of $\bff$ in terms of a barycentric form (computed, e.g., via the parametric Loewner framework) would have barycentric coefficients that could be represented exactly by low-rank tensors. Here we did not exactly recover $\bff$ but point out that this connection may have contributed to the high accuracy of the approximant computed by low-rank p-AAA. Further, we note that the standard p-AAA algorithm was not able to produce a better approximant before running out of memory due to the size of the underlying Loewner matrix. Specifically, the LS errors were several orders of magnitudes larger for the original p-AAA algorithm throughout the first few iterations. This behavior could be due to numerical inaccuracies which are caused by severely ill-conditioned Loewner matrices, which exceed a condition number of $10^{20}$ in multiple steps of the algorithm for data originating from this model. In this example the contracted Loewner matrices that are formed in the ALS procedure are much better behaved with condition numbers closer to $10^{10}$ for most iterations and therefore do not cause noticeable numerical errors. We note that we did not observe such a large discrepancy in condition numbers between the low-rank and standard versions of p-AAA in example~\ref{sec:synthetic}. It therefore remains an open question under which conditions we can expect the low-rank p-AAA LS matrices to be better conditioned than the Loewner matrices occurring in the original algorithm.
    \begin{figure}
    \centering
    \begin{subfigure}[b]{.5\textwidth}
    \begin{tikzpicture}
    \begin{axis}[
        width  = .95\linewidth,
        height = .225\textheight,
        view={135}{25},
        colormap name=paaacmap,
        grid=major,
        grid style={gray!25},
        tick style={draw=none},
        xmin=1e-6,
        xmax=1e2,
        ymin=1e-6,
        ymax=1e2,
        xtick={1e-4, 1},
        ytick={1e-4, 1},
        xmode=log,
        ymode=log,
        xlabel={$p^{(1)}$},
        ylabel={$p^{(2)}$},
        zlabel={$\bff(\bp)$},
        mesh/ordering=y varies,
        mesh/rows=34,
    ]
    
    \addplot3[surf] file {figure_data/thermal_block_validation_data.dat};%
    
    \end{axis}
    \end{tikzpicture}
    \end{subfigure}%
    \begin{subfigure}[b]{.5\textwidth}
    \begin{tikzpicture}
    \begin{axis}[
        width  = .95\linewidth,
        height = .225\textheight,
        view={135}{25},
        colormap name=paaacmap,
        grid=major,
        grid style={gray!25},
        tick style={draw=none},
        xmin=1e-6,
        xmax=1e2,
        ymin=1e-6,
        ymax=1e2,
        zmin=1e-12,
        zmax=1e-5,
        xtick={1e-4, 1},
        ytick={1e-4, 1},
        xmode=log,
        ymode=log,
        zmode=log,
        xlabel={$p^{(1)}$},
        ylabel={$p^{(2)}$},
        zlabel={$\be(\bp)$},
        mesh/ordering=y varies,
        mesh/rows=34,
    ]
    
    \addplot3[surf] file {figure_data/thermal_block_validation_error.dat};%
    
    \end{axis}
    \end{tikzpicture}
    \end{subfigure}
    \caption{Surface plot for $\bff$ and the error $\be := \lvert \bff - \br \rvert$ for example~\ref{sec:thermalblock}. Here, we consider $p^{(3)} = 10^{-7}$ and $p^{(4)} = 10^{3}$ which are an order of magnitude below/above the sampling points used for computing $\br$.}
    \label{fig:thermalblock}
    \end{figure}
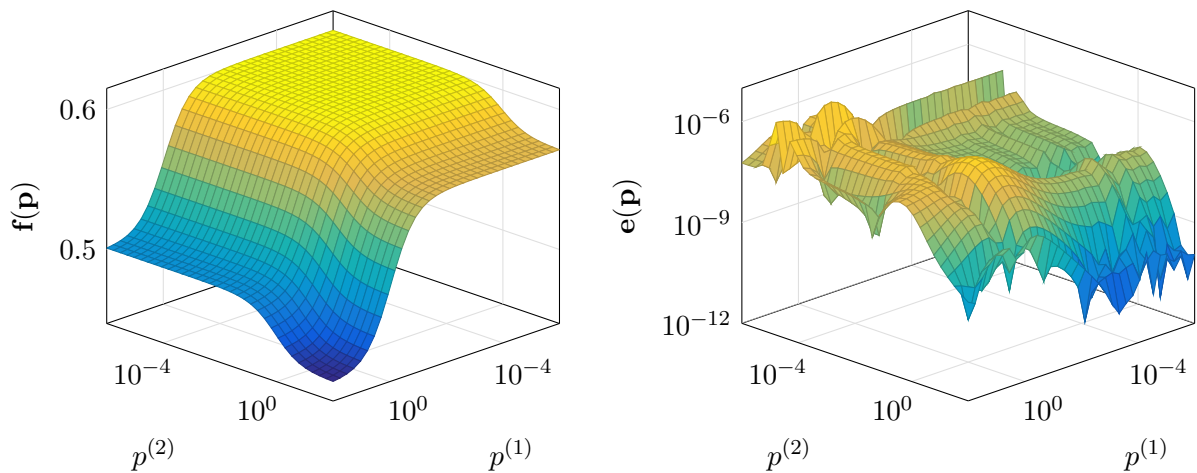
    
	\subsection{Trigonometric Example}
	\label{sec:trigexample}
	Next, we consider approximation of the irrational function
	\begin{equation}
		\label{eq:trigf}
		\bff(\bz) = \frac{z^{(1)} + \cdots + z^{(d)}}{2d + \cos(z^{(1)}) + \cdots + \cos(z^{(d)})}
	\end{equation}
	on $[-a,a]^d$. We consider two separate setups for this problem, one with $d=3$ and $a=10$ and another with $d=5$ and $a=4$.

\subsubsection{Trigonometric Example with Three Variables}
    \label{sec:trigexample3}
\begin{figure}
    \centering
    \begin{subfigure}[b]{.5\textwidth}
    \begin{tikzpicture}
    \begin{axis}[
        width  = .95\linewidth,
        height = .225\textheight,
        view={-45}{25},
        colormap name=paaacmap,
        grid=major,
        grid style={gray!25},
        tick style={draw=none},
        xmin=-10,
        xmax=10,
        ymin=-10,
        ymax=10,
        zmin=-4,
        zmax=4,
        xtick={-5,5},
        ytick={-5,5},
        xlabel={$z^{(1)}$},
        ylabel={$z^{(2)}$},
        zlabel={$\bff(\bz)$},
        mesh/ordering=x varies,
        mesh/rows=40,
    ]
    
    \addplot3[surf] file {figure_data/trig_3_validation_data.dat};%
    
    \end{axis}
    \end{tikzpicture}
    \end{subfigure}%
    \begin{subfigure}[b]{.5\textwidth}
    \begin{tikzpicture}
    \begin{axis}[
        width  = .95\linewidth,
        height = .225\textheight,
        view={-45}{25},
        colormap name=paaacmap,
        grid=major,
        grid style={gray!25},
        tick style={draw=none},
        xmin=-10,
        xmax=10,
        ymin=-10,
        ymax=10,
        zmin=1e-11,
        zmax=1e-6,
        zmode=log,
        xtick={-5,5},
        ytick={-5,5},
        xlabel={$z^{(1)}$},
        ylabel={$z^{(2)}$},
        zlabel={$\be(\bz)$},
        mesh/ordering=x varies,
        mesh/rows=40,
    ]
    
    \addplot3[surf] file {figure_data/trig_3_validation_error.dat};%
    
    \end{axis}
    \end{tikzpicture}
    \end{subfigure}
    \caption{Surface plot for $\bff$ and the error $\be := \lvert \bff - \br \rvert$ for example~\ref{sec:trigexample3}. The figure depicts $\bff$ and $\be$ with the fixed value $z^{(3)} = 0$.}
    \label{fig:trigexample_r3_tol_1e-02}
    \end{figure}
    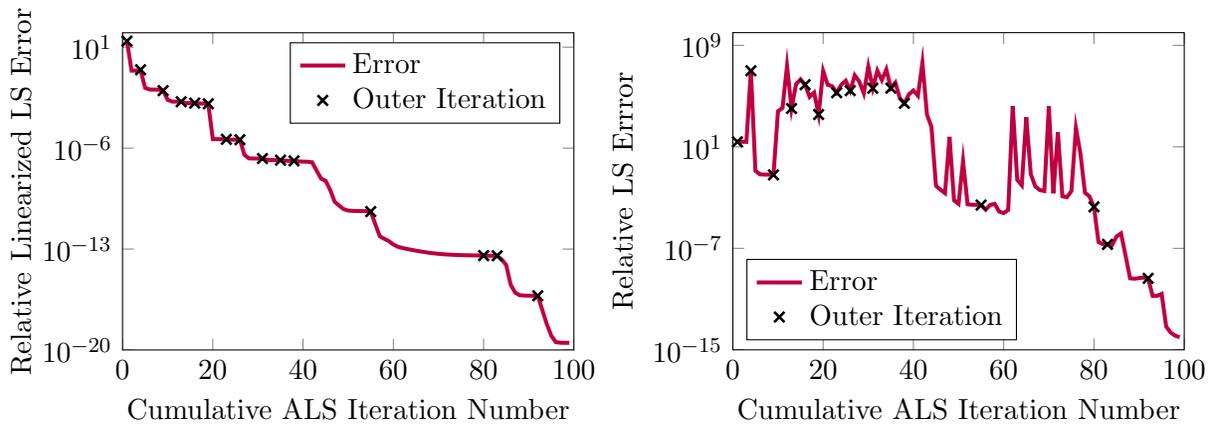
\begin{figure}[t]
    \centering
    \begin{subfigure}[b]{.5\textwidth}
    \begin{tikzpicture}
      \begin{axis}[%
        width  = 0.75\linewidth,
        height = .14\textheight,
        scale only axis,
        xmin = 0,
        xmax = 100,
        ymin = 1e-20,
        ymax = 1e2,
        xminorticks = false,
        yminorticks = false,
        xlabel = {Cumulative ALS Iteration Number},
        ylabel = {Relative Linearized LS Error},
        ylabel style   = {yshift = -.3em},
        ymode = log,
        cycle list name    = trigLSerrs,
        legend pos=north east,
        legend cell align={left},
        unbounded coords=jump,
      ]%
      \pgfplotstableread{figure_data/trig_3_convergence.dat}\tableINPUT
      \addplot table[x index = 0, y index = 1] from \tableINPUT;
      \pgfplotstableread{figure_data/trig_3_convergence_outer.dat}\outertableINPUT
      \addplot[only marks, mark=x, mark size=2.75pt, line width=1pt] table[x index = 0, y index = 1] from \outertableINPUT;
      \legend{Error, Outer Iteration}
      \end{axis}
   \end{tikzpicture}
   \end{subfigure}%
    \begin{subfigure}[b]{.5\textwidth}
    \begin{tikzpicture}
      \begin{axis}[%
        width  = 0.75\linewidth,
        height = .14\textheight,
        scale only axis,
        xmin = 0,
        xmax = 100,
        ymin = 1e-15,
        ymax = 1e10,
        xminorticks = false,
        yminorticks = false,
        xlabel = {Cumulative ALS Iteration Number},
        ylabel = {Relative LS Errror},
        ymode = log,
        cycle list name = trigLSerrs,
        legend pos=south west,
        legend cell align={left},
        unbounded coords=jump,
      ]%

      \pgfplotstableread{figure_data/trig_3_convergence.dat}\tableINPUT
      \addplot table[x index = 0, y index = 2] from \tableINPUT;
      \pgfplotstableread{figure_data/trig_3_convergence_outer.dat}\outertableINPUT
      \addplot[only marks, mark=x, mark size=2.75pt, line width=1pt] table[x index = 0, y index = 2] from \outertableINPUT;
      \legend{Error, Outer Iteration}
      \end{axis}
    \end{tikzpicture}
    \end{subfigure}
    \caption{Linearized and nonlinear LS approximation errors for example~\ref{sec:trigexample3}. The errors are depicted for each ALS iteration step performed throughout the low-rank p-AAA algorithm. Hence, each cumulative ALS iteration depicted on the $x$-axis represents one complete loop of Algorithm~\ref{alg:als}. Additionally, the outer iterations (i.e., the iterations of Algorithm~\ref{alg:lrpaaa}) where new interpolation points were added through a greedy selection are indicated via markers in the plots. The final order of the rational approximant is $(12,12,13)$ for this example. }
    \label{fig:trigexample_r3_tol_1e-02_convergence}
    \end{figure}%

	First, we consider this problem for $a=10$ and $d=3$. We use $100$ linearly spaced points in $[-10,10]$ as the sampling points for each variable. We then execute low-rank p-AAA with $r=3$. For the ALS stopping criterion we use a relative change in the objective function $\lVert \L_d \vectorize(\alpha) \rVert_2^2$ of less than $10^{-2}$. With these settings only $15$ iterations of low-rank p-AAA were necessary to compute a highly accurate rational approximant. The error of the approximant is illustrated Figure~\ref{fig:trigexample_r3_tol_1e-02}. In each iteration ALS required between two and $25$ iterations before the stopping criterion was satisfied. For this setup we monitor the relative linearized LS error
    \begin{align*}
        \sum_{i_1=1}^{N_1} \sum_{i_2=1}^{N_2} \sum_{i_3=1}^{N_3} \bigg( & \bd\left(Z^{(1)}_{i_1},Z^{(2)}_{i_2},Z^{(3)}_{i_3}\right)\bff\left(Z^{(1)}_{i_1},Z^{(2)}_{i_2},Z^{(3)}_{i_3}\right) \\
        &\qquad - \bn\left(Z^{(1)}_{i_1},Z^{(2)}_{i_2},Z^{(3)}_{i_3}\right) \bigg)^2 \Bigg/ \sum_{i_1=1}^{N_1} \sum_{i_2=1}^{N_2} \sum_{i_3=1}^{N_3}\bff\left(Z^{(1)}_{i_1},Z^{(2)}_{i_2},Z^{(3)}_{i_3}\right)^2
    \end{align*}
    as well as the relative nonlinear LS error along the lines of \eqref{eq:nonlinrellserror} for each step of ALS throughout all low-rank p-AAA iterations. We see in Figure~\ref{fig:trigexample_r3_tol_1e-02_convergence} that the last three p-AAA iterations had the most significant impact on lowering the nonlinear LS error. Further, we observe that the true error is very sensitive to changes in the barycentric coefficients and small changes in the linearized error may result in large changes in the non-linear error (see for example the error behavior between iterations $55$ and $80$). Additionally, finding a better solution for the linearized LS problem does not always result in a lower nonlinear error (again, observe iterations $55$ and $80$). These observations motivate using rather large stopping tolerances for ALS such that not too much time is spent on refining an ALS solution which may not significantly improve the nonlinear approximation error. This saves significant computational time in the first few iterations of low-rank p-AAA. Furthermore, we emphasize that the monotonic decrease in the linearized error is entirely due to our ALS initialization strategy introduced in Section~\ref{sec:alsinit}. If we use random initial guesses, the linearized errors will be significantly larger at the start of each ALS iteration, which typically leads to more required generalized SVD computations before the stopping criterion is satisfied.

	\subsubsection{Trigonometric Example with Five Variables}
    \label{sec:trigexample5}
	Next we consider approximation of $\bff$ defined in $\eqref{eq:trigf}$ with $d=5$ on $[-4,4]^5$. We choose $30$ linearly spaced points in $[-4,4]$ as sampling points for all variables. Additionally, we use $r=2$ and a stopping criterion based on a relative objective function change of $10^{-2}$ for ALS. We use a relative maximum error of $10^{-3}$ as a stopping criterion for low-rank p-AAA and obtain a rational approximant of order $(14,13,14,14,14)$ after $24$ iterations. We see in Figure~\ref{fig:trigexample_d5} the high fidelity of the approximation for a specific set of values which were not included in the sampled data set. We note that the standard p-AAA algorithm does not converge on a regular laptop computer for this example because the Loewner matrices require untenable memory after a few steps of the algorithm. For example, storage of the Loewner matrix $\L_5$ in double precision needed for computing the barycentric coefficients of an order $(5,5,5,5,5)$ approximant would require roughly $495$GB of memory  for this example.
        
    \begin{figure}
    \centering
    \begin{subfigure}[b]{.5\textwidth}
    \begin{tikzpicture}
    \begin{axis}[
        width  = .95\linewidth,
        height = .225\textheight,
        view={-45}{25},
        colormap name=paaacmap,
        grid=major,
        grid style={gray!25},
        tick style={draw=none},
        xmin=-4,
        xmax=4,
        ymin=-4,
        ymax=4,
        xtick={-2,2},
        ytick={-2,2},
        zmin=0,
        xlabel={$z^{(1)}$},
        ylabel={$z^{(2)}$},
        zlabel={$\bff(\bz)$},
        mesh/ordering=x varies,
        mesh/rows=40,
    ]
    
    \addplot3[surf] file {figure_data/trig_5_validation_data.dat};%
    
    \end{axis}
    \end{tikzpicture}
    \end{subfigure}%
    \begin{subfigure}[b]{.5\textwidth}
    \begin{tikzpicture}
    \begin{axis}[
        width  = .95\linewidth,
        height = .225\textheight,
        view={-45}{25},
        colormap name=paaacmap,
        grid=major,
        grid style={gray!25},
        tick style={draw=none},
        xmin=-4,
        xmax=4,
        ymin=-4,
        ymax=4,
        zmin=1e-8,
        zmax=1e-4,
        zmode=log,
        xtick={-2,2},
        ytick={-2,2},
        xlabel={$z^{(1)}$},
        ylabel={$z^{(2)}$},
        zlabel={$\be(\bz)$},
        mesh/ordering=x varies,
        mesh/rows=40,
    ]
    
    \addplot3[surf] file {figure_data/trig_5_validation_error.dat};%
    
    \end{axis}
    \end{tikzpicture}
    \end{subfigure}
    \caption{Surface plot for $\bff$ and the error $\be := \lvert \bff - \br \rvert$ for example~\ref{sec:trigexample5}. The figure depicts $\bff$ and $\be$ with the fixed values $z^{(3)} = 2.5$, $z^{(4)} =3$ and $z^{(5)} =3.5$.}
    \label{fig:trigexample_d5}
    \end{figure}
	
	\subsection{Approximation of a Mass Spring Damper Model}
    \label{sec:massspringdamper}
	As a final example we consider a modified version of the mass-spring-damper model from \cite{gugercin_structure-preserving_2012}. In this model $n$ masses are connected in a chain with springs serving as a connectors between two adjacent masses. The mass on the right-hand side of the chain is connected to a fixed wall via a spring and an external force is acting as a model input on the free-floating mass on the left-hand side of the chain. Additionally, each mass is connected to a damper. We consider the velocity of the first mass as a model output. Here, we assume that the masses and damping coefficients are all equal to a fixed value but that the stiffness of the springs is parametrized. In particular we assign the stiffness $k^{(1)}$ to the first $n/4$ springs, $k^{(2)}$ to the next $n/4$ springs and so on. Ultimately we end up with a mass spring damper model that depends on four parameters $\bk = \big(k^{(1)},k^{(2)},k^{(3)},k^{(4)}\big) \in \R^4$. As motivated in Section~\ref{sec:motivatingproblems}, we aim to approximate the transfer function of the underlying model given by
	\begin{equation*}
		\bH(s,\bk) = \bc^\top (s\bI - \bA(\bk))^{-1} \bb,
	\end{equation*}
	using the low-rank p-AAA algorithm. Here we consider $\bz = (s,\bk) \in \C^5$ and therefore an approximation problem with $d=5$ variables. Additionally, we consider a model with $n=40$ masses in this example. For our sample data we use $50$ equidistantly spaced points in $[0.1,2]\dot{\imath}$ in the $s$-variable and $25$ equidistant points in $[0.5,1]$ for the four spring stiffness values. We use $r=3$ and obtain a rational approximant with the order $(25,11,11,11,11)$ after $29$ iterations of the algorithms. Note that for this problem we set the maximum number of interpolation points for each parameter to $12$, which corresponds to an order-$11$ rational function in each of the parameters $k^{(1)},k^{(2)},k^{(3)}$ and $k^{(4)}$. The relative LS error at the end of low-rank p-AAA was approximately $2.18 \times 10^{-7}$. As depicted in Figure~\ref{fig:msd} we obtain high-fidelity approximations for parameter values which are outside of the sampled intervals.
    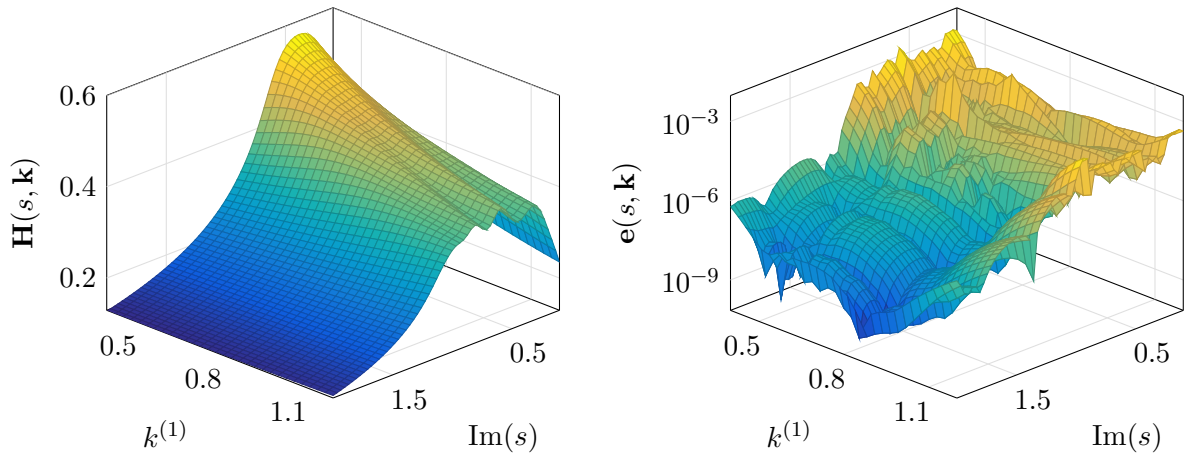
\begin{figure}
    \centering
    \begin{subfigure}[b]{.5\textwidth}
    \begin{tikzpicture}
    \begin{axis}[
        width  = .95\linewidth,
        height = .225\textheight,
        view={135}{30},
        colormap name=paaacmap,
        grid=major,
        grid style={gray!25},
        tick style={draw=none},
        xmin=0.1,
        xmax=2,
        xtick={0.5,1.5},
        ymin=0.4,
        ymax=1.2,
        ytick={0.5,0.8,1.1},
        zmin=0.1294,
        zmax=0.6,
        xlabel={$\Im(s)$},
        ylabel={$k^{(1)}$},
        zlabel={$\bH(s,\bk)$},
        mesh/ordering=x varies,
        mesh/rows=30,
        mesh/cols=60,
    ]
    
    \addplot3[surf] file {figure_data/msd_validation_data.dat};%
    
    \end{axis}
    \end{tikzpicture}
    \end{subfigure}%
    \begin{subfigure}[b]{.5\textwidth}
    \begin{tikzpicture}
    \begin{axis}[
        width  = .95\linewidth,
        height = .225\textheight,
        view={135}{30},
        colormap name=paaacmap,
        grid=major,
        grid style={gray!25},
        tick style={draw=none},
        xmin=0.1,
        xmax=2,
        xtick={0.5,1.5},
        ymin=0.4,
        ymax=1.2,
        ytick={0.5,0.8,1.1},
        ztick={1e-3,1e-6,1e-9},
        zmode=log,
        xlabel={$\Im(s)$},
        ylabel={$k^{(1)}$},
        zlabel={$\be(s,\bk)$},
        mesh/ordering=x varies,
        mesh/rows=30,
        mesh/cols=60,
    ]
    
    \addplot3[surf] file {figure_data/msd_validation_error.dat};%
    
    \end{axis}
    \end{tikzpicture}
    \end{subfigure}
    \caption{Surface plot for $\bH$ and the error $\be := \lvert \bH - \br \rvert$ for example~\ref{sec:massspringdamper}. The figure depicts $\bH$ and $\be$ with the fixed values $k^{(2)}=0.4$, $k^{(3)} = 1.1$ and $k^{(4)} = 0.75$. Note that the sample data for the stiffness values was in the interval $[0.5,1]$. Hence, errors are depicted for several values outside of the sampled intervals.}
    \label{fig:msd}
    \end{figure}

	\section{Conclusion and Future Directions}
    \label{sec:conclusion}

    We introduced barycentric forms for rational functions based on low-rank tensor decompositions of barycentric coefficients and showed how to incorporate them into the p-AAA framework, leading to the low-rank p-AAA algorithm. The proposed algorithm has a significantly improved computational complexity compared to standard p-AAA, making it possible to compute accurate rational approximations for problems that involve a higher number of variables. In our numerical experiments we demonstrate that our approach is effective by tackling several problems which are not computationally feasible in the standard p-AAA framework. This highlights the effectiveness of using function representations based on separable forms in rational approximation frameworks. We note that similar to  original p-AAA, our approach still requires access to the full data tensor $\D$ with $N_1 \cdots N_d$ entries. Removing this factor from the algorithm's computational complexity and enhancing its scalability even more will require revising the barycentric formulation such that it is applicable to non-grid data samples. 
    Such considerations are left to be investigated in future work.

    \section*{Acknowledgments}
    This work was supported in part by the National Science Foundation (NSF), United States under Grant No.
    DMS-2411141.
		
    \bibliographystyle{plainurl}
    \bibliography{references}

\begin{thebibliography}{10}

\bibitem{antoulas_approximation_2005}
A.~C. Antoulas.
\newblock {\em Approximation of Large-Scale Dynamical Systems}.
\newblock Society for Industrial and Applied Mathematics, Philadelphia, 2005.
\newblock \href {https://doi.org/10.1137/1.9780898718713} {\path{doi:10.1137/1.9780898718713}}.

\bibitem{antoulas_scalar_1986}
A.~C. Antoulas and B.~D.~Q. Anderson.
\newblock On the scalar rational interpolation problem.
\newblock {\em IMA Journal of Mathematical Control and Information}, 3(2-3):61--88, 1986.
\newblock \href {https://doi.org/10.1093/imamci/3.2-3.61} {\path{doi:10.1093/imamci/3.2-3.61}}.

\bibitem{antoulas_interpolatory_2020}
A.~C. Antoulas, C.~A. Beattie, and S.~Güğercin.
\newblock {\em Interpolatory Methods for Model Reduction}.
\newblock Society for Industrial and Applied Mathematics, 2020.
\newblock \href {https://doi.org/10.1137/1.9781611976083} {\path{doi:10.1137/1.9781611976083}}.

\bibitem{antoulas_loewner_2024}
A.~C. Antoulas, I.~V. Gosea, and C.~Poussot-Vassal.
\newblock The {Loewner} framework for parametric systems: Taming the curse of dimensionality, 2024.
\newblock \href {https://doi.org/10.48550/arXiv.2405.00495} {\path{doi:10.48550/arXiv.2405.00495}}.

\bibitem{antoulas_two-variable_2012}
A.~C. Antoulas, A.~C. Ionita, and S.~Lefteriu.
\newblock On two-variable rational interpolation.
\newblock {\em Linear Algebra and its Applications}, 436(8):2889--2915, 2012.
\newblock \href {https://doi.org/10.1016/j.laa.2011.07.017} {\path{doi:10.1016/j.laa.2011.07.017}}.

\bibitem{antoulas_chapter_2017}
A.~C. Antoulas, S.~Lefteriu, and A.~C. Ionita.
\newblock Chapter 8: A tutorial introduction to the {Loewner} framework for model reduction.
\newblock In {\em Model {Reduction} and {Approximation}}, Computational {Science} \& {Engineering}, pages 335--376. Society for Industrial and Applied Mathematics, Philadelphia, 2017.
\newblock \href {https://doi.org/10.1137/1.9781611974829.ch8} {\path{doi:10.1137/1.9781611974829.ch8}}.

\bibitem{austin_computing_2015}
A.~P. Austin and L.~N. Trefethen.
\newblock Computing eigenvalues of real symmetric matrices with rational filters in real arithmetic.
\newblock {\em SIAM Journal on Scientific Computing}, 37(3):A1365--A1387, 2015.
\newblock \href {https://doi.org/10.1137/140984129} {\path{doi:10.1137/140984129}}.

\bibitem{balicki2025code}
L.~Balicki.
\newblock Code for the numerical examples in: Multivariate rational approximation via low-rank tensors and the {p-AAA} algorithm (1.0), 2025.
\newblock \href {https://doi.org/10.5281/zenodo.14794468} {\path{doi:10.5281/zenodo.14794468}}.

\bibitem{baur2014model}
U.~Baur, P.~Benner, and L.~Feng.
\newblock Model order reduction for linear and nonlinear systems: A system-theoretic perspective.
\newblock {\em Archives of Computational Methods in Engineering}, 21(4):331--358, 2014.
\newblock \href {https://doi.org/10.1007/s11831-014-9111-2} {\path{doi:10.1007/s11831-014-9111-2}}.

\bibitem{benner2013numerical}
P.~Benner and J.~Saak.
\newblock Numerical solution of large and sparse continuous time algebraic matrix {Riccati} and {Lyapunov} equations: a state of the art survey.
\newblock {\em GAMM-Mitteilungen}, 36(1):32--52, 2013.
\newblock \href {https://doi.org/10.1002/gamm.201310003} {\path{doi:10.1002/gamm.201310003}}.

\bibitem{BerG17}
M.~Berljafa and S.~G{\"u}ttel.
\newblock The {RKFIT} algorithm for nonlinear rational approximation.
\newblock {\em SIAM Journal on Scientific Computing}, 39(5):A2049--A2071, 2017.
\newblock \href {https://doi.org/10.1137/15M1025426} {\path{doi:10.1137/15M1025426}}.

\bibitem{berrut_lebesgue_1997}
J.-P. Berrut and H.~D. Mittelmann.
\newblock Lebesgue constant minimizing linear rational interpolation of continuous functions over the interval.
\newblock {\em Computers \& Mathematics with Applications}, 33(6):77--86, 1997.
\newblock \href {https://doi.org/10.1016/S0898-1221(97)00034-5} {\path{doi:10.1016/S0898-1221(97)00034-5}}.

\bibitem{berrut_barycentric_2004}
J.-P. Berrut and L.~N. Trefethen.
\newblock Barycentric {Lagrange} interpolation.
\newblock {\em SIAM Review}, 46(3):501--517, 2004.
\newblock \href {https://doi.org/10.1137/S0036144502417715} {\path{doi:10.1137/S0036144502417715}}.

\bibitem{bertram_class_2024}
C.~Bertram and H.~Faßbender.
\newblock A class of {Petrov}-{Galerkin} {Krylov} methods for algebraic {Riccati} equations, 2024.
\newblock \href {https://doi.org/10.48550/arXiv.2312.08855} {\path{doi:10.48550/arXiv.2312.08855}}.

\bibitem{borcea2021reduced}
L.~Borcea, V.~Druskin, and J.~Zimmerling.
\newblock A reduced order model approach to inverse scattering in lossy layered media.
\newblock {\em Journal of Scientific Computing}, 89(1):1, 2021.
\newblock \href {https://doi.org/10.1007/s10915-021-01616-7} {\path{doi:10.1007/s10915-021-01616-7}}.

\bibitem{boulle_data-driven_2022}
N.~Boullé, C.~J. Earls, and A.~Townsend.
\newblock Data-driven discovery of {Green}’s functions with human-understandable deep learning.
\newblock {\em Scientific Reports}, 12(1):4824, 2022.
\newblock \href {https://doi.org/10.1038/s41598-022-08745-5} {\path{doi:10.1038/s41598-022-08745-5}}.

\bibitem{brennan_contour_2023}
M.~C. Brennan, M.~Embree, and S.~Gugercin.
\newblock Contour integral methods for nonlinear eigenvalue problems: A systems theoretic approach.
\newblock {\em SIAM Review}, 65(2):439--470, 2023.
\newblock \href {https://doi.org/10.1137/20M1389303} {\path{doi:10.1137/20M1389303}}.

\bibitem{brubeck_lightning_2022}
P.~D. Brubeck and L.~N. Trefethen.
\newblock Lightning {Stokes} solver.
\newblock {\em SIAM Journal on Scientific Computing}, 44(3):A1205--A1226, 2022.
\newblock \href {https://doi.org/10.1137/21M1408579} {\path{doi:10.1137/21M1408579}}.

\bibitem{rodriguez_p-aaa_2023}
A.~Carracedo~Rodriguez, L.~Balicki, and S.~Gugercin.
\newblock The p-{AAA} algorithm for data-driven modeling of parametric dynamical systems.
\newblock {\em SIAM Journal on Scientific Computing}, 45(3):A1332--A1358, 2023.
\newblock \href {https://doi.org/10.1137/20M1322698} {\path{doi:10.1137/20M1322698}}.

\bibitem{cirillo2019advances}
E.~Cirillo.
\newblock {\em Advances in barycentric rational interpolation of a function and its derivatives}.
\newblock PhD thesis, Universit{\`a} della Svizzera Italiana, 2019.

\bibitem{the_morwiki_community_synthetic_2005}
The~MORwiki Community.
\newblock Synthetic parametric model, 2005.
\newblock URL: \url{http://modelreduction.org/index.php/Synthetic_parametric_model}.

\bibitem{deschrijver_stability_2008}
D.~Deschrijver and T.~Dhaene.
\newblock Stability and passivity enforcement of parametric macromodels in time and frequency domain.
\newblock {\em IEEE Transactions on Microwave Theory and Techniques}, 56(11):2435--2441, 2008.
\newblock \href {https://doi.org/10.1109/TMTT.2008.2005868} {\path{doi:10.1109/TMTT.2008.2005868}}.

\bibitem{driscoll_aaa_2024}
T.~A. Driscoll, Y.~Nakatsukasa, and L.~N. Trefethen.
\newblock {AAA} rational approximation on a continuum.
\newblock {\em SIAM Journal on Scientific Computing}, 46(2):A929--A952, 2024.
\newblock \href {https://doi.org/10.1137/23M1570508} {\path{doi:10.1137/23M1570508}}.

\bibitem{DramacGB2015QuadVF}
Z.~Drma{\v c}, S.~Gugercin, and C.~Beattie.
\newblock Quadrature-based {Vector} {Fitting} for discretized {$\mathcal{H}_2$} approximation.
\newblock {\em SIAM Journal on Scientific Computing}, 37(2):A625--A652, 2015.
\newblock \href {https://doi.org/10.1137/140961511} {\path{doi:10.1137/140961511}}.

\bibitem{druskin2009solution}
V.~Druskin, L.~Knizhnerman, and M.~Zaslavsky.
\newblock Solution of large scale evolutionary problems using rational {Krylov} subspaces with optimized shifts.
\newblock {\em SIAM Journal on Scientific Computing}, 31(5):3760--3780, 2009.
\newblock \href {https://doi.org/10.1137/080742403} {\path{doi:10.1137/080742403}}.

\bibitem{gosea2024structured}
I.~V. Gosea, S.~Gugercin, and Steffen~WR Werner.
\newblock Structured barycentric forms for interpolation-based data-driven reduced modeling of second-order systems.
\newblock {\em Advances in Computational Mathematics}, 50(2):1--32, 2024.
\newblock \href {https://doi.org/10.1007/s10444-024-10118-7} {\path{doi:10.1007/s10444-024-10118-7}}.

\bibitem{grasedyck_literature_2013}
L.~Grasedyck, D.~Kressner, and C.~Tobler.
\newblock A literature survey of low-rank tensor approximation techniques.
\newblock {\em GAMM-Mitteilungen}, 36(1):53--78, 2013.
\newblock \href {https://doi.org/10.1002/gamm.201310004} {\path{doi:10.1002/gamm.201310004}}.

\bibitem{gugercin_structure-preserving_2012}
S.~Gugercin, R.~V. Polyuga, C.~Beattie, and A.~van~der Schaft.
\newblock Structure-preserving tangential interpolation for model reduction of port-{Hamiltonian} systems.
\newblock {\em Automatica}, 48(9):1963--1974, 2012.
\newblock \href {https://doi.org/10.1016/j.automatica.2012.05.052} {\path{doi:10.1016/j.automatica.2012.05.052}}.

\bibitem{gustavsen_rational_1999}
B.~Gustavsen and A.~Semlyen.
\newblock Rational approximation of frequency domain responses by {Vector} {Fitting}.
\newblock {\em IEEE Transactions on Power Delivery}, 14(3):1052--1061, 1999.
\newblock \href {https://doi.org/10.1109/61.772353} {\path{doi:10.1109/61.772353}}.

\bibitem{guttel_nonlinear_2017}
S.~Güttel and F.~Tisseur.
\newblock The nonlinear eigenvalue problem.
\newblock {\em Acta Numerica}, 26:1--94, 2017.
\newblock \href {https://doi.org/10.1017/S0962492917000034} {\path{doi:10.1017/S0962492917000034}}.

\bibitem{ho_effective_1966}
B.~L. Ho and R.~E. Kalman.
\newblock Effective construction of linear state-variable models from input/output functions.
\newblock {\em at - Automatisierungstechnik}, 14(1-12):545--548, 1966.
\newblock \href {https://doi.org/10.1524/auto.1966.14.112.545} {\path{doi:10.1524/auto.1966.14.112.545}}.

\bibitem{ionita_data-driven_2014}
A.~C. Ionita and A.~C. Antoulas.
\newblock Data-driven parametrized model reduction in the {Loewner} framework.
\newblock {\em SIAM Journal on Scientific Computing}, 36(3):A984--A1007, 2014.
\newblock \href {https://doi.org/10.1137/130914619} {\path{doi:10.1137/130914619}}.

\bibitem{karachalios_6_2021}
D.~S. Karachalios, I.~V. Gosea, and A.~C. Antoulas.
\newblock 6 {The} {Loewner} framework for system identification and reduction.
\newblock In {\em Volume 1 {System}- and {Data}-{Driven} {Methods} and {Algorithms}}, pages 181--228. De Gruyter, Berlin, Boston, 2021.
\newblock \href {https://doi.org/10.1515/9783110498967-006} {\path{doi:10.1515/9783110498967-006}}.

\bibitem{kolda_tensor_2009}
T.~G. Kolda and B.~W. Bader.
\newblock Tensor decompositions and applications.
\newblock {\em SIAM Review}, 51(3):455--500, 2009.
\newblock \href {https://doi.org/10.1137/07070111X} {\path{doi:10.1137/07070111X}}.

\bibitem{lietaert_automatic_2022}
P.~Lietaert, K.~Meerbergen, J.~Pérez, and B.~Vandereycken.
\newblock Automatic rational approximation and linearization of nonlinear eigenvalue problems.
\newblock {\em IMA Journal of Numerical Analysis}, 42(2):1087--1115, 2022.
\newblock \href {https://doi.org/10.1093/imanum/draa098} {\path{doi:10.1093/imanum/draa098}}.

\bibitem{nakatsukasa_aaa_2018}
Y.~Nakatsukasa, O.~Sète, and L.~N. Trefethen.
\newblock The {AAA} algorithm for rational approximation.
\newblock {\em SIAM Journal on Scientific Computing}, 40(3):A1494--A1522, 2018.
\newblock \href {https://doi.org/10.1137/16M1106122} {\path{doi:10.1137/16M1106122}}.

\bibitem{rave_non-stationary_2021}
S.~Rave and J.~Saak.
\newblock A non-stationary thermal-block benchmark model for parametric model order reduction.
\newblock In {\em Model Reduction of Complex Dynamical Systems}, pages 349--356. Springer International Publishing, Cham, 2021.
\newblock \href {https://doi.org/10.1007/978-3-030-72983-7_16} {\path{doi:10.1007/978-3-030-72983-7_16}}.

\bibitem{rubin_bounding_2022}
D.~Rubin, A.~Townsend, and H.~Wilber.
\newblock Bounding {Zolotarev} numbers using {Faber} rational functions.
\newblock {\em Constructive Approximation}, 56(2):207--232, 2022.
\newblock \href {https://doi.org/10.1007/s00365-022-09585-2} {\path{doi:10.1007/s00365-022-09585-2}}.

\bibitem{silverman_realization_1971}
L.~Silverman.
\newblock Realization of linear dynamical systems.
\newblock {\em IEEE Transactions on Automatic Control}, 16(6):554--567, 1971.
\newblock \href {https://doi.org/10.1109/TAC.1971.1099821} {\path{doi:10.1109/TAC.1971.1099821}}.

\bibitem{simoncini2016computational}
V.~Simoncini.
\newblock Computational methods for linear matrix equations.
\newblock {\em SIAM Review}, 58(3):377--441, 2016.
\newblock \href {https://doi.org/10.1137/130912839} {\path{doi:10.1137/130912839}}.

\bibitem{trefethen_approximation_2020}
L.~N. Trefethen.
\newblock {\em Approximation Theory and Approximation Practice, Extended Edition}.
\newblock Society for Industrial and Applied Mathematics, Philadelphia, 2019.
\newblock \href {https://doi.org/10.1137/1.9781611975949} {\path{doi:10.1137/1.9781611975949}}.

\bibitem{van_loan_generalizing_1976}
C.~F. Van~Loan.
\newblock Generalizing the {Singular} {Value} {Decomposition}.
\newblock {\em SIAM Journal on Numerical Analysis}, 13(1):76--83, 1976.
\newblock \href {https://doi.org/10.1137/0713009} {\path{doi:10.1137/0713009}}.

\bibitem{wilber2021computing}
H.~Wilber.
\newblock {\em Computing numerically with rational functions}.
\newblock PhD thesis, Cornell University, 2021.

\bibitem{wilber_data-driven_2022}
H.~Wilber, A.~Damle, and A.~Townsend.
\newblock Data-driven algorithms for signal processing with trigonometric rational functions.
\newblock {\em SIAM Journal on Scientific Computing}, 44(3):C185--C209, 2022.
\newblock \href {https://doi.org/10.1137/21M1420277} {\path{doi:10.1137/21M1420277}}.

\end{thebibliography}

    \appendix

    \section{Proof of Lemma~\ref{lemma:rank1data}}
        \label{appendix:proofrank1lemma}
        Let the sampling points $\bZ^{(j)}$ and interpolation nodes $\blambda^{(j)}$ be given such that $\rk(\D) = 1$ and $\L_d \vectorize(\alpha) = 0$ for some $\alpha \neq 0$. The assumption $\L_d \vectorize(\alpha) = 0$ is satisfied if and only 
        \begin{equation*}
            \bd\left(Z^{(1)}_{i_1},\ldots,Z^{(d)}_{i_d}\right) \bff\left(Z^{(1)}_{i_1},\ldots,Z^{(d)}_{i_d}\right)  - \bn\left(Z^{(1)}_{i_1},\ldots,Z^{(d)}_{i_d}\right) = 0
        \end{equation*}
        and therefore
        \begin{equation*}
             \bff\left(Z^{(1)}_{i_1},\ldots,Z^{(d)}_{i_d}\right) = \bn\left(Z^{(1)}_{i_1},\ldots,Z^{(d)}_{i_d}\right) \bigg/ \bd\left(Z^{(1)}_{i_1},\ldots,Z^{(d)}_{i_d}\right),
        \end{equation*}        
        for $i_j=1,\ldots,N_j$, $j=1,\ldots,d$ and $\bd,\bn$ as in \eqref{eq:nd} and \eqref{eq:dd}, respectively. In particular, this means that $\br = \bn / \bd$ recovers the data in $\D$ exactly:
        \begin{equation*}
            \D_{i_1\ldots i_d} = \br\left(Z^{(1)}_{i_1},\ldots,Z^{(d)}_{i_d}\right) \quad \text{for} \quad i_j = 1,\ldots,N_j \quad \text{and} \quad j=1,\ldots,d.
        \end{equation*}
        Further, the assumption $\rk(\D) = 1$ implies that there exist CP factors $\delta^{(j)} \in \C^{N_j}$ for $j=1,\ldots,d$ such that
        \begin{equation*}
            \D_{i_1\ldots i_d} = \delta^{(1)}_{i_1} \cdots \delta^{(d)}_{i_d}  \quad \text{for} \quad i_j = 1,\ldots,N_j \quad \text{and} \quad j=1,\ldots,d.
        \end{equation*}
        Next, we define the functions
        \begin{equation*}
            \br^{(j)}\left(z^{(j)}\right) = \br\left(Z^{(1)}_{1},\ldots,Z^{(j-1)}_1,z^{(j)},Z^{(j+1)}_1,\ldots,Z^{(d)}_{1}\right) \\
        \end{equation*}
        for $j = 1,\ldots,d$. We note that $\br^{(j)}$ satisfies the interpolation conditions
        \begin{equation*}
            \br^{(j)}\left(\lambda^{(j)}_{i_j}\right) = \bff\left(Z^{(1)}_{1},\ldots,Z^{(j-1)}_1,\lambda^{(j)}_{i_j},Z^{(j+1)}_1,\ldots,Z^{(d)}_{1}\right) \quad \text{for} \quad i_j = 1,\ldots,n_j
        \end{equation*}
        and can therefore be written in the barycentric form
        \begin{equation}
            \label{eq:rjbarycentric}
            \br^{(j)}\left(z^{(j)}\right) = \cC^{(j)}\left(z^{(j)}\right)^\top \left( \alpha^{(j)} \circ \H^{(j)} \right) \bigg/ \cC^{(j)}\left(z^{(j)}\right)^\top \alpha^{(j)},
        \end{equation}
        where $\H^{(j)} = \left[ \br^{(j)}\left(\lambda^{(j)}_1\right) \; \cdots \; \br^{(j)}\left(\lambda^{(j)}_{n_j}\right) \right]^\top \in \C^{n_j}$ and $\alpha^{(j)} \in \C^{n_j}$. Next we assume without loss of generality that $\D_{1 \ldots 1} \neq 0$ (note that we can always rearrange the sampling points such that this assumption holds). This allows for writing
        \begin{align*}
        \D_{i_1\ldots i_d} = \delta^{(1)}_{i_1}\delta^{(2)}_{i_2}\cdots\delta^{(d)}_{i_d} 
        &= \frac{\left(\delta^{(1)}_{i_1}\delta^{(2)}_{1}\cdots\delta^{(d)}_{1}\right)\cdots \left(\delta^{(1)}_{1}\delta^{(2)}_{1}\cdots\delta^{(d)}_{i_d}\right)}{\left(\delta^{(1)}_{1}\delta^{(2)}_{1}\cdots \delta^{(d)}_{1}\right)^{d-1}} \\
        &=
         \frac{\br\left(Z^{(1)}_{i_1},Z^{(2)}_{1},\ldots,Z^{(d)}_{1}\right)\cdots \br\left(Z^{(1)}_{1},Z^{(2)}_{1},\ldots,Z^{(d)}_{i_d}\right)}{\br\left(Z^{(1)}_{1},Z^{(2)}_{1},\ldots,Z^{(d)}_{1}\right)^{d-1}} \\
        &= 
         \frac{\br^{(1)}\left(Z^{(1)}_{i_1}\right) \cdots \br^{(d)}\left(Z^{(d)}_{i_d}\right) }{\br\left(Z^{(1)}_{1},Z^{(2)}_{1},\ldots,Z^{(d)}_{1}\right)^{d-1}} = \frac{\br^{(1)}\left(Z^{(1)}_{i_1}\right) \cdots \br^{(d)}\left(Z^{(d)}_{i_d}\right) }{\D_{1\ldots 1}^{d-1}}
        \end{align*}
        and similarly
        \begin{equation*}
            \H_{i_1\ldots i_d} = \frac{\br^{(1)}\left(\lambda^{(1)}_{i_1}\right) \cdots \br^{(d)}\left(\lambda^{(d)}_{i_d}\right) }{\D_{1\ldots 1}^{d-1}}.
        \end{equation*}
        Hence, the separable rational function
        \begin{equation*}
            \tilde{\br}(\bz) = \frac{\br^{(1)}\left(z^{(1)}\right) \cdots \br^{(d)}\left(z^{(d)}\right) }{\D_{1\ldots 1}^{d-1}}
        \end{equation*}
        exactly recovers the data in $\D$. By considering the barycentric forms introduced in \eqref{eq:rjbarycentric}, we see that the barycentric form of $\tilde{\br}$ can be written as    
        \begin{align*}
             \tilde{\br}(\bz)&= \frac{1}{\D_{1\ldots 1}^{d-1}} \frac{\left(\cC^{(1)}\left(z^{(1)}\right)^\top \left( \alpha^{(1)} \circ \H^{(1)} \right) \right) \cdots \left( \cC^{(d)}\left(z^{(d)}\right)^\top \left( \alpha^{(d)} \circ \H^{(d)} \right)\right)}{ \left(\cC^{(1)}\left(z^{(1)}\right)^\top \alpha^{(1)} \right) \cdots \left( \cC^{(d)}\left(z^{(d)}\right)^\top \alpha^{(d)}\right)} \\
            &= \frac{1}{\D_{1\ldots 1}^{d-1}} \frac{\left[ \cC^{(1)}\left( z^{(1)} \right) \otimes \cdots \otimes \cC^{(d)}\left( z^{(d)} \right) \right]^\top \left[ \left(  \alpha^{(1)} \otimes \cdots \otimes \alpha^{(d)} \right) \circ \left( \H^{(1)} \otimes \cdots \otimes \H^{(d)} \right)\right]}{\left[ \cC^{(1)}\left( z^{(1)} \right) \otimes \cdots \otimes \cC^{(d)}\left( z^{(d)} \right) \right]^\top \left[ \left(  \alpha^{(1)} \otimes \cdots \otimes \alpha^{(d)} \right)\right]} \\
            &= \frac{\left[ \cC^{(1)}\left( z^{(1)} \right) \otimes \cdots \otimes \cC^{(d)}\left( z^{(d)} \right) \right]^\top \left[ \left(  \alpha^{(1)} \otimes \cdots \otimes \alpha^{(d)} \right) \circ \vectorize(\H) \right]}{\left[ \cC^{(1)}\left( z^{(1)} \right) \otimes \cdots \otimes \cC^{(d)}\left( z^{(d)} \right) \right]^\top \left[ \left(  \alpha^{(1)} \otimes \cdots \otimes \alpha^{(d)} \right)\right]}.
        \end{align*}
        This is exactly the barycentric form introduced in \eqref{eq:manyparambarycentric} but with a rank-$1$ tensor of barycentric coefficients
        \begin{equation*}
            \tilde{\alpha} = \alpha^{(1)} \otimes \cdots \otimes \alpha^{(d)}
        \end{equation*}
        instead of $\alpha$. Since the function $\tilde{\br}$ exactly recovers the data in $\D$ we get that the linearized LS errors along the lines of \eqref{eq:lserrorsum} are zero and thus
        \begin{equation*}
            \L_d \left(  \alpha^{(1)} \otimes \cdots \otimes \alpha^{(d)} \right) = 0.
        \end{equation*}

    \section{Proof of Lemma~\ref{lemma:blockseparablef}}
        \label{appendix:proofblockKlemma}
        The inverse of a matrix can be written as
		\begin{equation*}
			\bK(\bz)^{-1} = \frac{\operatorname{adj}(\bK(\bz))}{\det(\bK(\bz))},
		\end{equation*}
		where $\operatorname{adj}(\bK(\bz))$ denotes the adjugate which is a matrix whose entries are polynomials in terms of the entries of $\bK(\bz)$. Therefore, the representation
		\begin{equation*}
			\bff(\bz) = \bc^\top \bK(\bz)^{-1} \bb = \frac{\bc^\top \operatorname{adj}(\bK(\bz)) \bb}{\det(\bK(\bz))}
		\end{equation*}
		is a fraction of the polynomial $p(\bz) = \bc^\top \operatorname{adj}(\bK(\bz)) \bb$ in the numerator and $q(\bz) = \det(\bK(\bz))$ in the denominator. Next, we rewrite $\det(\bK(\bz))$ based on various determinant identities to further analyze the underlying structure: 
		\begin{align*}
			\det(\bK(\bz)) = \det\left(\begin{bmatrix}
				z^{(1)} \bK_{11} & \bK_{12} \\
				\bK_{21} & z^{(2)} \bK_{22}
			\end{bmatrix}\right) &= \det\left(z^{(2)} \bK_{22}\right) \det\left(z^{(1)}\bK_{11} - \frac{1}{z^{(2)}} \bK_{12} \bK_{22}^{-1} \bK_{21} \right).
		\end{align*}
		Since $\bK_{12},\bK_{21}$ have rank $r$ or less, we can write $\bK_{12} \bK_{22}^{-1} \bK_{21} = \bW \bY^\top$ where $\bW,\bY \in \C^{m_2 \times r}$. This allows for applying the low-rank update formula for determinants which yields
		\begin{align*}
			\det\left(z^{(1)} \bK_{11} - \frac{1}{z^{(2)}} \bW\bY^\top \right) &= \det\left(z^{(1)} \bK_{11}\right) \det\left(\bI_r - \frac{1}{z^{(1)}z^{(2)}} \bY^\top \bK_{11}^{-1} \bW \right) \\
			&= {z^{(1)}}^{m_1} \det\left(\bK_{11}\right) \frac{1}{{z^{(1)}}^r {z^{(2)}}^r} \det\left({z^{(1)}}^r {z^{(2)}}^r \bI_r - \bY^\top \bK_{11}^{-1}\bW\right)
		\end{align*}
		Note that $\det\left({z^{(1)}} {z^{(2)}} \bI_r - \bY^\top \bK_{11}^{-1}\bW\right)$ is a degree $r$ polynomial in ${z^{(1)}}{z^{(2)}}$ and thus has a separation rank of at most $r+1$. We therefore obtain that 
        \begin{equation*}
             \det(\bK(\bz)) =  \det\left(\bK_{11}\right) \det\left(\bK_{22}\right) {z^{(1)}}^{m_1-r} {z^{(2)}}^{m_2-r} \det\left({z^{(1)}}{z^{(2)}} \bI_r - \bY^\top \bK_{11}^{-1}\bW\right)
        \end{equation*}
       has a separation rank that is less than or equal to $r+1$.
	
\end{document}